\newcommand{\A}{\mathcal{A}}
\newcommand{\x}{\mathbf{x}}
\newcommand{\g}{\mathbf{g}}
\newcommand{\vve}{\mathbf{v}}
\newcommand{\Cind}{C_{\indep}}
\newcommand{\EF}{F_{\E}}
\newcommand{\EG}{G_{\E}}
\newcommand{\f}{\mathbf{f}}
\newcommand{\Sym}{\mathcal{S}}
\newcommand{\R}{\mathbb{R}}
\newcommand{\s}{\mathbf{s}}
\newcommand{\M}{\mathcal{M}}
\newcommand{\F}{\mathcal{F}}
\newcommand{\E}{\mathbb{E}}
\newcommand{\N}{\mathbb{N}}
\newcommand{\rb}{]}
\newcommand{\lb}{[}
\newcommand{\D}{\mathcal{D}}
\newcommand{\Tr}{\text{Tr }}
\newcommand{\ve}{\varepsilon}
\newcommand{\e}{\mathbf{e}}
\newcommand{\Ci}{C_{\text{init}}}
\newcommand{\G}{\mathcal{G}}
\newcommand{\NN}{[ N]}
\newcommand{\perf}{\mathcal{P}}
\newcommand{\indep}{\perp\kern-5pt\perp}
\renewcommand{\th}{\text{th}}
\newcommand{\Ge}{\mathcal{G}_E}
\newcommand{\Gc}{\mathcal{G}_c}
\newcommand{\Cn}{C_{\text{variance}}}
\begin{document}
\title{Computer-aided analyses of stochastic first-order methods, via interpolation conditions for stochastic optimization\thanks{A. Rubbens and S. Colla are, respectively, Research Fellow and FRIA grantees of the Fonds de la Recherche Scientifique - FNRS.  J.\!~Hendrickx is supported by the SIDDARTA Concerted Research Action (ARC) of the Federation Wallonie-Bruxelles.}}
\titlerunning{Interpolation conditions for stochastic optimization.}
\author{Anne Rubbens\and Sébastien Colla \and Julien M. Hendrickx\footnotemark[1]}


\institute{ICTEAM Institute, UCLouvain, Belgium. name.surname@uclouvain.be
}

\date{Received: date / Accepted: date}

\maketitle

\begin{abstract}
This work proposes a framework, embedded within the Performance Estimation framework (PEP), for obtaining worst-case performance guarantees on stochastic first-order methods. Given a first-order method, a function class, and a noise model with prescribed expectation and variance properties, we present a semidefinite program (SDP), whose size grows linearly with $N$, the number of iterations analyzed, and whose solution yields a convergence guarantee on the problem. 

The framework accommodates a wide range of stochastic settings, with finite or infinite support, including the unstructured noise model with bounded variance, finite-sum optimization, and block-coordinate methods, in a unified manner, as guarantees apply to any setting consistent with the noise model, i.e., its expectation and variance. It covers both non-variance-reduced and variance-reduced methods. Using the framework, we analyze the stochastic gradient method under several noise models, and illustrate how the resulting numerical and analytical convergence rates connect with existing results. In particular, we provide improved convergence rates on the unstructured noise model with bounded variance and in the block-coordinate setting.
\keywords{Performance Estimation \and Stochastic Optimization \and  Worst-case analysis}
\subclass{ 90C15 \and 90C22 \and 65Y20}
\end{abstract}

\section{Introduction}
\label{sec:introduction}

Consider an unconstrained minimization problem
\begin{equation}\label{eq:minimization_problem}
    \min_{x\in \R^d} f(x),
\end{equation}
where $f:\R^d\to\R$ is a differentiable convex function. To solve (\ref{eq:minimization_problem}), consider a stochastic first-order black-box optimization method, i.e., a method that relies on evaluations of $f$ and \emph{noisy} evaluations of its gradient at a sequence of iterates. Such methods include, e.g., the stochastic gradient method (SGD), whose iteration is given by
\begin{align} \label{eq:SGD}\tag{SGD}
  x_{k+1}=x_k-\alpha_k(\nabla f(x_k)+\varepsilon_k),
\end{align}
where $\varepsilon_k$ is a zero-mean random variable, independent across queries.

Stochastic first-order methods, as opposed to exact first-order methods, i.e., $\varepsilon_k=0, \ \forall k$, naturally arise in a wide range of contexts where one has access to (or prefers to compute), e.g., an unbiased estimator of the gradient of $f$ rather than the gradient itself. Depending on these contexts, the distribution of $\varepsilon_k$ satisfies different assumptions. 
\paragraph{Additive noise on deterministic gradients}
The simplest setting arises when minimizing a deterministic function whose gradient evaluations are corrupted by additional zero-mean stochastic noise. This noise can be either \emph{relative}, with variance depending on the gradient norm 
or \emph{absolute}, with variance independent of the gradient norm. 

\paragraph{Stochastic optimization problems}
Another example is the minimization of an expectation, i.e.,
\begin{align}\label{eq:stoch_opti_problems}
    f(x) = \E_{\xi \sim \D}[f_\xi(x)],
\end{align}
where samples $\xi \sim \D$ are available, but the distribution $\D$ itself is unknown. In such cases, gradients of $f$ are unavailable, contrary to gradients of $f_\xi$, which provide an unbiased estimator of $\nabla f$. Hence, methods rely on unbiased noisy evaluations $\nabla f_\xi(x_k) = \nabla f(x_k) + \varepsilon_k$.

Stochastic optimization problems are particularly relevant in statistical supervised learning. Indeed, letting $\D$ be the unknown distribution linking inputs and outputs, $\xi$ be an input-output sample drawn from $\D$, $x$ a prediction model, and $f_\xi(x)$ the loss incurred by $x$ on $\xi$, minimizing $f$ corresponds to minimizing the expected risk or loss of $x$; see, e.g., \cite[Section~3.1]{bottou2018optimization}.
\paragraph{Finite-sum minimization}
One of the most studied categories of problems relying on stochastic optimization consists of the minimization of a large sum of functions, i.e.,
\begin{align}\label{eq:functions_sum}
    f(x) = \sum_{i=1}^n f_i(x),
\end{align}
where $n$ is large and $f_i$'s are differentiable convex functions. In contrast with the first two examples, exact computation of $\nabla f$ is possible in this case, but is computationally expensive. On the other hand, choosing uniformly at random an index $i \in \{1,2,\ldots,n\}$, furnishes $\nabla f_i(x)$, an unbiaised estimator of $\nabla f(x)$, whose computation is approximately $n$ times cheaper than the one of $\nabla f(x)$. As shown, e.g., in \cite{bottou2007tradeoffs}, it can therefore be advantageous to minimize \eqref{eq:functions_sum} using stochastic methods that rely on $\nabla f_i(x_k) = \nabla f(x_k) + \varepsilon_k$, instead of $\nabla f(x_k)$. Alternative ways to construct estimators of $\nabla f$ in this setting, leading to different assumptions on $\varepsilon_k$, are discussed in Section~\ref{sec:related_works}.

Such minimization problems typically arise as approximations of \eqref{eq:stoch_opti_problems} using $n$ independent samples of $\xi$, see, e.g., \cite[Section~3.1]{bottou2018optimization}.
\paragraph{Randomized block-coordinate descent methods}
Finally, another example consists of minimizing large-dimensional differentiable functions, that exhibit component-wise structure. In this setting, it is again possible to compute the exact gradient, but it is computationally advantageous to evaluate the gradient only along a single coordinate, chosen uniformly at random.  In this case, methods sample $d\e_i^\top\nabla f(x_k) \e_i=\nabla f(x_k)+\varepsilon_k$, where $\e_i$ is the $i$-th unit vector, $d$ is the dimension of $x$, and $\ve_k$ is zero-mean. Such methods are known as \emph{randomized coordinate descent} (RCD) methods, and can be generalized to gradient estimators along blocks of coordinates; see, e.g., \cite{nesterov2012efficiency,beck2013convergence}.
\\~\\
\noindent\textbf{ Goal}. Given a function class $\F$, a stochastic first-order black-box method $\M$, and the expectation and variance of the estimator of the gradient $\M$ has access to, we aim at obtaining a worst-case guarantee on the performance of $\M$ on $\F$, i.e., a bound on the accuracy of the solution after $N$ iterations of $\M$, valid for all functions in $\F$ and all distributions of $\varepsilon$ satisfying the given assumptions. In particular, we say the bound is \emph{tight} when attained by a specific function under a specific noise distribution.

\subsection{Related works}\label{sec:related_works}
\paragraph{Stochastic first-order methods}
First-order stochastic methods, first conceptualized in \cite{robbins1951stochastic}, have since then been the subject of extensive research, see, e.g., \cite{borkar2008stochastic,bottou2018optimization,garrigos2023handbook,gower2019sgd,netrapalli2019stochastic} and references therein. Analyses depend on (i) the method, (ii) the function class considered, and (iii) the noise model, that is, at minima, its variance and expectation, and, in some structural settings as the finite sum and the block-coordinate settings, additional information on its distribution. 

A widely analyzed model, though restrictive, is the additive bounded noise model ($\E_{\ve_k}[\|\ve_k\|^2]\leq \sigma^2$), see, e.g., \cite{nemirovski1994efficient,nemirovski2009robust,nemirovskij1983problem}. Under this model, methods ranging from SGD, with or without iterates averaging \cite{polyak1992acceleration,ruppert1988efficient}, to methods involving momentum \cite{xiao2009dual,ghadimi2012optimal,pmlr-v80-cohen18a,cyrus2018robust,devolder2013first} have been designed and analyzed on smooth convex functions.

Stochastic first-order methods have also been extensively analyzed in the finite-sum setting $f=\frac{1}{n}\sum_if_i$. Analyses depend on the properties of the $f_i$'s, and on the way an estimator of $\nabla f(x_k)$ is constructed. Classical assumptions include convexity and $L_i$-smoothness of each $f_i$, and boundedness of the variance at the optimum, i.e., $\E[\|f_i(x_\star)\|]=\sigma_\star^2\leq+\infty$, where $x_\star$ is a minimizer of $f$. Under these assumptions, \eqref{eq:SGD} where gradient estimators sample $\nabla f_i(x_k)$ uniformly (or via importance sampling) has been extensively analyzed \cite{bottou2018optimization,gower2019sgd,moulines2011non,needell2014stochastic,sebbouh2021almost,vaswani2019fast,pmlr-v37-zhaoa15}. In particular, when $\sigma_\star^2=0$ (e.g., in the finite-sum setting for overparameterized models, where $\sigma_\star^2=0$, see, e.g., \cite{ma2018power,vaswani2019fast}), \eqref{eq:SGD} with constant stepsizes achieves optimal $\mathcal{O}(1/k)$ and linear rates, as in the deterministic setting. In addition, if estimators of $\nabla f(x_k)$ are built using variance-reduction techniques \cite{defazio2014saga,gower2020variance,johnson2013accelerating,nguyen2017sarah,roux2012stochastic,schmidt2017minimizing,shalev2013stochastic,xiao2014proximal}, i.e., so that the variance of the noise on $\nabla f(x_k)$ naturally decreases iteratively, mitigating the effect of $\sigma_\star^2$, then \eqref{eq:SGD} achieves linear convergence on strongly convex functions. Examples include SAGA \cite{defazio2014saga} and SVRG \cite{johnson2013accelerating,xiao2014proximal}. In addition to \eqref{eq:SGD}, proximal point methods \cite{traore2024variance} and accelerated methods \cite{allen2018katyusha,fercoq2015accelerated,lin2015universal,vaswani2019fast,zhou2019direct} have also been analyzed in this setting.

Further, \eqref{eq:SGD} has been analyzed under weak and strong growth conditions, that is, respectively, for some $A,B\geq 0$, $\E_{\ve_k}[\|\varepsilon_k\|^2] \leq A(f(x_k)-f(x_\star))$ \cite{vaswani2019fast} and $\E_{\ve_k}[\|\varepsilon_k\|^2] \leq B\|\nabla f(x_k)\|^2$ \cite{schmidt2013fast}, where $x_\star$ is a minimizer of $f$ and $(x_k)_{k=1,2,\ldots}$ are the iterates of the method. These conditions yield optimal rates on convex and strongly convex functions. Those same convergence rates have been proven in the block-coordinate setting \cite{beck2013convergence,nesterov2012efficiency,wright2015coordinate}.

Finally, beyond smooth convex optimization, stochastic proximal methods have also been designed and analyzed on \emph{non-smooth} functions, see, e.g.,  \cite{duchi2009efficient,rosasco2020convergence}, and stochastic first-order methods on \emph{non-convex} functions have been, more recently, investigated under several assumptions, see, e.g., \cite{asi2019stochastic,danilova2022recent,davis2019stochastic,davis2019proximally,drori2020complexity,ghadimi2013stochastic,khaled2020better}.
\paragraph{Unified analysis of stochastic first-order methods}
As highlighted in, e.g., \cite{gorbunov2020unified,traore2024variance}, analyses of first-order methods across many settings can be unified by analyzing problem classes encoding (i) the method, (ii) the function class considered, and (ii) the expectation and variance of the noise $\varepsilon_k$. Such analyses yield convergence rates valid for all distributions satisfying the noise model. In particular, \cite[Assumption 4.1]{gorbunov2020unified} introduces the class of noise models for which there exist non-negative constants $A_1,A_2,C,D_1,D_2,\rho\in \R$ and a possibly random sequence $(\sigma_k^2)$ such that
\begin{align}\label{eq:assum_variance_gorbu}
    \E[\varepsilon_k | x_k] = 0, \quad \E[\|\varepsilon_k\|^2 | x_k] &\leq A_1(f(x_k) - f(x_\star))-\|\nabla f(x_k)\|^2 + C \sigma_k^2+D_1, \\
\E[\sigma_{k+1}^2 | x_k] &\leq (1 - \rho) \sigma_k^2 + A_2(f(x_k) - f(x_\star))+D_2,\nonumber
\end{align}
where $x_\star$ is a minimizer of $f$, $(x_k)_{k=1,2,\ldots}$ are the iterates of the method, and the expectation is taken with respect to the randomness of the method. Assumption \eqref{eq:assum_variance_gorbu} suffices for linear convergence of \eqref{eq:SGD}, on smooth strongly convex functions, to a neighborhood depending on $C_1,C_2$ \cite[Theorem 4.1]{gorbunov2020unified}, and this assumption is satisfied by most settings mentioned above, see \cite[Table 2]{gorbunov2020unified}. 

For instance, the finite-sum setting, where each \( f_i \) is convex and \( L_i \)-smooth, and gradient estimators uniformly sample \( \nabla f_i(x_k) \), satisfies \eqref{eq:assum_variance_gorbu} with \( A = 4 \max_i L_i \) and \( B = 2 \sigma_\star^2 \); see, e.g., \cite[Lemma 4.20]{garrigos2023handbook}. Similarly, the additive bounded noise model on \( L \)-smooth functions satisfies \eqref{eq:assum_variance_gorbu} with \( A = 2L \) and \( B = \sigma^2 \); see, e.g., \cite[Example 2]{stich2019unified}. These shared parameters explain the similarity in convergence rates across these noise models. Likewise, the strong growth condition, the additive relative noise model, and the block-coordinate setting can all be captured as a single noise model under this unified noise framework, as they share the same expectation and variance.
\paragraph{Computer-aided analysis of stochastic first-order methods}
Computer-aided techniques for automatically analyzing and designing first-order methods have recently attracted significant attention; see, e.g., \cite{taylor2024towards} for a recent survey. These include (i) the automatic computation of worst-case instances for a given method over a function class $\F$ after $N$ iterations, known as the Performance Estimation (PEP) framework \cite{drori2014performance,taylor2017smooth}; (ii) the automated construction of Lyapunov (or potential) functions to certify (sub)linear convergence by analyzing a small number of iterations \cite{lessard2016analysis,taylor2018lyapunov,taylor2019stochastic}; and (iii) constructive approaches for designing optimal or efficient methods tailored to specific function classes \cite{das2024branch,drori2020efficient,kim2016optimized,taylor2023optimal}. In a wide range of settings, the PEP framework provides tight \emph{numerical} worst-case guarantees as solutions of semidefinite programs (SDP). However, the size of the resulting SDPs increases with the number of iterations $N$, and extracting analytical closed-form expressions from the numerical results is generally a challenging task. On the other hand, Lyapunov certificates require initial guessing on their structure and may not yield tight guarantees, but are computationally and algebraically more tractable, since the associated SDPs are small-sized. Such computer-aided analyses are mainly restricted to deterministic settings, but there exist extensions to the stochastic setting that we quickly summarize.

First, computer-aided analyses naturally extend to the analysis of \emph{inexact} methods, where gradient errors are deterministic perturbations, as in \cite{doi:10.1137/060676386,devolder2013first,devolder2014first}, see, e.g., \cite{de2020worst,gannot2022frequency,hu2021analysis,taylor2017exact,vernimmen2024convergence,vernimmen2025empirical,vernimmen2025worst} for PEP analyses of inexact methods. We refer to this approach as the \emph{worst-scenario} approach. Indeed, conducting such analyses on settings in which perturbations are, in fact, stochastic rather than deterministic, yields convergence rates that hold for the worst realization of the noisy perturbations. Such rates are thus pessimistic in these settings.

Second, when the noise takes values in a finite set of cardinality $n$, e.g., in the finite-sum and the block-coordinate setting, stochastic methods can be analyzed \emph{exactly} by enumerating all scenarios and considering the average-case performance, see, e.g., \cite{cortild2025new,hu2017unified,kamri2023worst,taylor2019stochastic}. We refer to this approach by the \emph{all-scenarios} approach. The resulting numerical convergence rates are tight, and the associated closed-form analytical rates may remain valid for infinite-dimensional supports, as showcased, e.g., in \cite[Appendix E]{taylor2019stochastic}. On the downside, the size of the associated SDPs grows like $n^N$, limiting the scalability of this approach and restricting its use to the analysis of a small number of iterations only. Moreover, when rates depend on the support cardinality $n$, extracting closed-form expressions requires solving a sequence of increasingly large problems, whose solutions provide intuition for the analytical expression. 

Then, \cite{van2021speed} studies the additive bounded noise model by (i) theoretically linking SDP solutions to robustness and convergence guarantees (see, e.g., \cite[Theorems 5.3, 6.2]{van2021speed}), and (ii) leveraging these SDPs to analyze or design methods.

Finally, \cite{abbaszadehpeivasti2022convergence} analyzed the RCD by incorporating expected values directly as variables in the associated SDPs, under necessary constraints meant to ensure the variables are indeed expectations. See, also, \cite{drori2014performance} for an early introduction of this same idea, under the additive bounded noise model.

\subsection{Contribution}
Inspired by the idea of \cite{abbaszadehpeivasti2022convergence,drori2014performance} to embed expected values as variables to PEP formulations, we present a PEP-based approach for analyzing all stochastic zero-mean noises, uncorrelated across queries, and such that a possibly random sequence $(\sigma_k^2)_{k=0,1,\ldots,N-1}$, and for some constants $A_1,B_1,C_1,E_1,A_2,B_2,C_2,D_2,\rho \in \R$, it holds \begin{align}\begin{split}\label{eq:assum_noise}
   \E[\|\ve_k\|^2|\A_k]&\leq A_1(f(x_k)-f(x_\star))+B_1\|g_k\|^2+C_1\|x_k-x_\star\|^2+D_1+E_1\sigma^2_k,\\
      \E[\sigma_{k+1}^2|\A_k]&\leq A_2(f(x_k)-f(x_\star))+B_2\|g_k\|^2+C_2\|x_k-x_\star\|^2+D_2+(1-\rho)\sigma^2_k,\end{split}
\end{align}
where $(x_k)_{k=0,1,\ldots,N}$ is the sequence of iterates of a given first-order method, $g_k\in\partial f(x_k)$, $x_\star$ is a minimizer of $f$, and $\A_k:=\sigma(x_0,\ldots,x_k,\ve_0,\ldots,\ve_{k-1},\sigma_0^2,\ldots,\sigma_{k}^2)$ is the $\sigma$-algebra of available information up to iteration $k$, see, e.g., \cite{durrett2019probability}. This model includes, as special cases, all classical noise assumptions presented in Section \ref{sec:related_works}. 

More precisely, we 
\begin{itemize}
    \item Propose a framework, fitting the PEP methodology, for automated analysis of first-order stochastic methods with fixed stepsizes across a wide range of function classes (specified in Assumption \ref{assum:prob_def}), under all zero-mean noise models satisfying \eqref{eq:assum_noise} (Section~\ref{sec:framework}). Given $N$ a number of iterations, a method, a function class, and a class of noise models (expectation and variance), we propose a SDP of size $\propto N$, whose solution provides a convergence rate on the problem, which happens to be, in some cases, tight (Section \ref{sec:num_ill}).

    \item Use the proposed framework to analyze \eqref{eq:SGD} on the class of smooth (strongly) convex functions, under several noise assumptions. These assumptions cover the additive bounded noise model, the finite-sum setting with or without variance-reduction techniques, and the block-coordinate setting. We show both analytically (Theorem \ref{thm:absolute_noise_cst_stepsize}, Proposition \ref{prop:bound_A1_C1}) and numerically how the proposed framework fits in the existing first-order literature. In particular, it improves on all existing rates that only use the noise's expectation and variance, and are hence valid for any distribution satisfying these assumptions. In addition, the framework provides overall improved guarantees, i.e., even as compared to convergence rates derived on the basis of more information on the noise distribution than its variance and expectation, (i) on unstructured noise models as the additive bounded noise model, and (ii) in certain structural settings, on sublinear convergence rates (Section~\ref{sec:num_ill}). 
\end{itemize}
By construction, on \emph{structural} noise models, e.g., on the finite-sum or the block-coordinate settings, the all-scenarios approach introduced in \cite{taylor2019stochastic}, which uses this structural information on the noise's distribution, yields stronger convergence rates than the proposed framework. 
However, (i) these rates are valid for a smaller class of problems, preventing a unified analysis across different settings, (ii) the all-scenarios approach is restricted to finite-support noises, and can only improve on our framework on models for which information on the noise distribution is available, and (iii) this approach is mainly restricted, due to scalability issues, to a Lyapunov-based analysis. Hence, in some cases of sublinear convergence, our framework, by solving small-sized SDPs, improves on the all-scenarios approach, given the conservatism introduced by guessing the Lyapunov function's structure.
\subsection{Notation}
Throughout, we consider the standard Euclidean space $\R^d$ endowed with standard inner product $\langle\cdot,\cdot\rangle$ and induced norm $\|\cdot\|$. Given $N \in \mathbb{N}$, we let $[ N]=\{0, \cdots,N\}$. Given $k\in\N$, we denote by $\e_k$ a unit vector with $1$ in position $k$. We let $\Sym_+^d:=\{M\in \R^{d\times d}| \ M \text{ symmetric}, \ M\succeq 0\}$ be the cone of $d$-dimensional symmetric positive semidefinite matrices, and denote by $\D^d:=\{M\in \R^{d\times d}| \ M \text{ diagonal}\}$. In addition, given $M\in \R^{d\times d}$, we denote by $\|M\|=\max_{x\in \R^d:\ \|x\|=1}\ \|Mx\|$. 

\paragraph{Function classes}
Given a function $f:\R^d\to \R$, we say $f$ is convex if $f(\alpha x+(1-\alpha)y)\leq \alpha f(x)+(1-\alpha)f(y) \ \forall x,y \in \R^d, \ \alpha \in (0,1)$. For some parameter $\mu\geq 0$, we say $f$ is $\mu$-strongly convex if $f-\frac{\mu}{2}\|\cdot\|$ is convex.  
We refer by $\partial f(x)$ to the subdifferential \cite{rockafellar1997convex} of $f$ at $x$, defined as the set of vectors $v_x\in \R^d$ satisfying $f(y)\geq f(x)+\langle v_x,y-x\rangle$, and denote by $v_x$ a gradient of $f$ at $x$ (i.e., $v_x\in\partial f(x)$). 
In addition, for some $L\geq 0$, we say $f$ is $L$-Lipschitz continuous if $\|f(x)-f(y)\|\leq L\|x-y\|, \ \forall x,y\in \R^d $, and $f$ is $L$-smooth if it is differentiable everywhere with first derivative $L$-Lipschitz continuous. Finally, we denote by $x_\star$ a minimizer of $f$.

We denote by $\F_{\mu,L}$ the class of $\mu$-strongly convex $L$-smooth functions (with $\mu=0$ for convex functions), with convention $L=\infty$ for $\mu$-strongly convex functions that are not smooth. 
\paragraph{Random variables}
Given $N$ random variables $\ve_0,\ldots,\ve_{N-1}$, we denote by $\ve$ the random variable that aggregates all $\ve_k$' s, whose distribution depends on those of $\ve_k$' s, i.e. $\ve=[\ve_0,\ldots,\ve_{N-1}]$. We denote by $\E[\cdot]$ the expectation with respect to the randomness of the method under analysis, that is, the expectation taken over the joint distribution of all random variables involved in the method. 

Finally, we let $\{\A_k\}_{k\in [N]}$ be the filtration of available information at iteration $k$, i.e., 
$\A_k=\sigma(x_0,\ldots,x_k,\ve_0,\ldots,\ve_{k-1},\sigma_0^2,\ldots,\sigma_{k}^2)$.
\section{Preliminaries: computer-aided analysis of inexact methods.}
This section introduces the core concepts of the Performance Estimation Problem (PEP) framework, originally proposed in~\cite{drori2014performance} and further developed in~\cite{taylor2017smooth}, through a simple illustrative example, as in, e.g., \cite[Section 2]{taylor2024towards}. Consider a single iteration of the inexact gradient descent on the class $\mathcal{F}_{\mu,L}$ of $L$-smooth ($\mu$-strongly) convex functions, under additive deterministic perturbation of bounded norm $|\sigma|$. Given $x_\star$ a minimizer of $f$, let the goal be to derive the worst-case upper bound on $f(x_1) - f(x_{\star})$, valid for all functions in $\mathcal{F}_{\mu,L}$, under the assumption that $\|x_0 - x_\star\|^2 \leq 1$. The tightest possible such worst-case bound can be formulated as the solution to the following optimization problem:
\begin{align}    \label{eq:PEP_untractable_preliminaries}
    \underset{\substack{d\in\N,\ f \in \mathcal{F}_{\mu,L},\\ x_0,x_\star\in\R^d,\\ \varepsilon_0\in \R^d}}{\max} \bigg\{ f(x_1) - f(x_\star) \ \big|\ 
    & \|x_0 - x_\star\|^2 \leq 1,  x_1 = x_0 - \alpha \left( \nabla f(x_0) + \varepsilon_0 \right), \nonumber\\&\nabla f(x_\star) = 0,\  
    \|\varepsilon_0\|^2 \leq \sigma^2 \bigg\}.
\end{align}
\paragraph{Finite-dimensional reformulation of (\ref{eq:PEP_untractable_preliminaries})}
As such, given the infinite-dimensional variable $f\in\F_{\mu,L}$, \eqref{eq:PEP_untractable_preliminaries} is infinite-dimensional and untractable. However, as shown, e.g., in \cite{drori2014performance,taylor2017smooth}, it can be equivalently reformulated as a maximization over finite-dimensional variables. Typically, consider a set $S=\{(x_i,f_i,g_i)\}_{i=0,1,\star}\in(\R^d\times \R\times \R^d)^3$, meant to encode the iterates of \eqref{eq:SGD} and evaluations of the function and (sub)gradient at these iterates, and $\ve_0\in\R^d$. To obtain an equivalent reformulation of \eqref{eq:PEP_untractable_preliminaries} as a maximization over $S\cup\ve_0$, one should ensure (i) that $S\cup\ve_0$ is consistent with the dynamics of the method and the problem assumptions, i.e., it satisfies \eqref{eq:SGD}, $\|x_0-x_\star\|^2\leq 1$, $g_\star=0$, and $\|\ve_0\|^2\leq\sigma^2$, and (ii) that $S$ indeed contains evaluations of a function in $\F_{\mu,L}$, i.e. that $S$ is \emph{$\F_{\mu,L}$-interpolable}.
\begin{definition}[$\F$-interpolability]\label{def:F_interpolability}
Given $N\in \mathbb{N}$, consider a function class $\F$ and a finite set $S=\{(x_i,f_i,g_i)\}_{i\in \NN}\subset (\R^d\times \R\times \R^d)^{N+1}$. We say $S$ is $\F$-interpolable if there exists some $f\in \F$ such that
\begin{align}\label{eq:F_interpolability}
    f(x_i)=f_i, \text{ and } g_i\in \partial f(x_i), \ \forall i\in\NN.
\end{align}
\end{definition}

For some function classes $\F$, $\mathcal{F}$-interpolability of a set $S$ can be expressed via a set of algebraic conditions involving the elements of $S$. When such conditions are both necessary and sufficient for $\F$-interpolability, they are referred to as \emph{interpolation conditions}, and they enable an exact reformulation of~\eqref{eq:PEP_untractable_preliminaries} as a finite-dimensional optimization problem. On the other hand, if only necessary (but not sufficient) conditions for $\mathcal{F}$-interpolability are imposed, the resulting optimization problem becomes a relaxation of~\eqref{eq:PEP_untractable_preliminaries}, which may yield upper bounds on the true worst-case performance. For $\F_{\mu,L}$, interpolation conditions are known, see \cite{taylor2017smooth} (see also \cite{azagra2017whitney}).
\begin{proposition}[Interpolation conditions for $\F_{\mu,L}$, \cite{taylor2017smooth}, Theorem 4]\label{prop:FmuL_CI}
Given $-\infty<\mu<L\leq +\infty$ and $N\in \mathbb{N}$, consider $\F_{\mu,L}$ and a set $S=\{(x_i,f_i,g_i)\}_{i\in \NN}\subset (\R^d\times \R\times \R^d)^{N+1}$. Then, $S$ is $\F$-interpolable if and only if, $\forall i,j \in \NN$,
\begin{align}\label{eq:FmuL_CI}
    f_j\geq f_i+\langle g_i,x_j-x_i\rangle +\frac{L}{2(L-\mu)}\bigg(\frac{\|g_i-g_j\|^2}{L}+\mu\|x_i-x_j\|^2-\frac{2\mu}{L}\langle g_i-g_j,x_i-x_j\rangle\bigg).
\end{align}    
\end{proposition}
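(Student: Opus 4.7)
For \textbf{necessity}, I would apply the classical two-point inequality for $\mu$-strongly convex, $L$-smooth functions to each pair $(x_i,x_j)$. Writing $h := f - \frac{\mu}{2}\|\cdot\|^2$, which is convex and $(L-\mu)$-smooth whenever $f\in\F_{\mu,L}$, the standard co-coercivity estimate
\[
h(x_j) \geq h(x_i) + \langle \nabla h(x_i), x_j - x_i \rangle + \tfrac{1}{2(L-\mu)}\|\nabla h(x_i)-\nabla h(x_j)\|^2
\]
applied at $(x_i,x_j)$, after substituting $\nabla h = g - \mu x$ and rearranging, yields exactly (\ref{eq:FmuL_CI}). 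This is a direct if slightly tedious algebraic computation.

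For \textbf{sufficiency}, the plan is to build an explicit interpolant $f\in\F_{\mu,L}$ from the data in three stages of increasing generality.

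\emph{Stage 1 (plain convex, $\mu = 0$, $L = \infty$).} Here (\ref{eq:FmuL_CI}) reduces to $f_j \geq f_i + \langle g_i, x_j - x_i\rangle$. Setting
\[
f(x) := \max_{i \in [N]} \{ f_i + \langle g_i, x - x_i\rangle \}
\]
produces a convex interpolant: the given inequalities guarantee the max at $x_i$ equals $f_i$ and that $g_i\in\partial f(x_i)$.

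\emph{Stage 2 (strongly convex, non-smooth: $\mu > 0$, $L = \infty$).} Substitute $\tilde f_i := f_i - \frac{\mu}{2}\|x_i\|^2$, $\tilde g_i := g_i - \mu x_i$. A short computation shows (\ref{eq:FmuL_CI}) with $L=\infty$ becomes the plain-convex inequality for the tilde data, so Stage 1 yields a convex $\tilde f$, and $f := \tilde f + \frac{\mu}{2}\|\cdot\|^2$ is the required interpolant.

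\emph{Stage 3 (general $\mu,L$).} Use Legendre--Fenchel duality: $f\in\F_{\mu,L}$ iff $f^*\in\F_{1/L,1/\mu}$, with the Fenchel--Young correspondence $(x_i,f_i,g_i)\leftrightarrow(g_i,\langle g_i,x_i\rangle-f_i,x_i)$. One verifies that, after symmetrizing in the pair $(i,j)$, the inequality (\ref{eq:FmuL_CI}) is invariant under this swap together with the exchange $(\mu,L)\leftrightarrow(1/L,1/\mu)$. Hence the dual data satisfy the interpolation conditions of $\F_{1/L,1/\mu}$, Stage 2 produces an interpolating $f^*$, and its conjugate is the desired $f$.

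The \textbf{main obstacle} is the algebraic verification underlying Stage 3: showing that the quadratic right-hand side of (\ref{eq:FmuL_CI}) is exactly self-dual under the Fenchel transform, and carefully handling the boundary cases $\mu=0$ or $L=\infty$, where gradients must be replaced by subdifferentials and conjugates may take the value $+\infty$. A secondary difficulty is checking that the explicit interpolant constructed via max-of-affine plus quadratic regularization really recovers \emph{both} $f(x_i)=f_i$ and $g_i\in\partial f(x_i)$: this requires invoking the strong-convexity and the smoothness halves of (\ref{eq:FmuL_CI}) simultaneously, rather than separately.
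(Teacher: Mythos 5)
The paper does not prove this proposition; it is imported verbatim as Theorem~4 of \cite{taylor2017smooth}, so there is no in-paper proof to compare against. Evaluating your plan on its own merits: the necessity direction, Stage~1, and Stage~2 are sound, but Stage~3 as written is circular and fails to cover the very class the paper actually needs, namely $\F_{\mu,L}$ with both $\mu>0$ and $L<\infty$.

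Here is the problem. Stage~3 converts $\F_{\mu,L}$-interpolability of the data into $\F_{1/L,1/\mu}$-interpolability of the Fenchel-swapped data and then says ``Stage~2 produces an interpolating $f^*$.'' But Stage~2 only constructs interpolants in $\F_{\mu',\infty}$, i.e.\ strongly convex and \emph{non-smooth}. For Stage~2 to apply to the dual data you need the dual smoothness constant $1/\mu$ to be $+\infty$, i.e.\ $\mu=0$. So your three stages handle $\F_{0,\infty}$, $\F_{\mu,\infty}$, and $\F_{0,L}$, but not the general smooth strongly convex case: conjugation alone sends $\F_{\mu,L}$ to another class of the same type, $\F_{1/L,1/\mu}$, so the reduction chain never terminates.

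The fix is to reorder the reductions. Apply the quadratic shift of Stage~2 to the \emph{general} data first: if $(x_i,f_i,g_i)$ satisfies~(\ref{eq:FmuL_CI}) for $\F_{\mu,L}$, then $(x_i,\,f_i-\tfrac{\mu}{2}\|x_i\|^2,\,g_i-\mu x_i)$ satisfies it for $\F_{0,L-\mu}$; this computation works for any finite $L$, not only $L=\infty$ as you restricted it to. Then Fenchel-swap the shifted data to obtain the $\F_{1/(L-\mu),\infty}$ condition, which is now genuinely in the non-smooth strongly convex class, so a second quadratic shift and Stage~1 finish the construction. This is exactly the route of \cite{taylor2017smooth}: shift, conjugate, shift, max-of-affine. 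This ordering also handles the hypoconvex range $\mu<0$ that the proposition allows: after the first shift the data sit in a convex class, so conjugation is well defined, whereas conjugating the $\F_{\mu,L}$ data directly when $\mu<0$ would run into the issues you flagged yourself about $f^*$ taking the value $+\infty$.
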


Hence, \eqref{eq:PEP_untractable_preliminaries} can be equivalently reformulated as
\begin{align}\label{eq:PEP_finite_dim_preliminaries}
    \underset{\substack{S=\{(x_i,f_i,g_i)\}_{i=0,1,\star}\\\ve_0\in\R^d\\d\in\N}}{\max} \bigg\{f_1-f_\star|\ &S \text{ satisfies \eqref{eq:FmuL_CI}}, \ \|x_0-x_\star\|^2\leq 1,\nonumber\\& x_1=x_0-\alpha (g_0+\varepsilon_0), \ g_\star=0,\ \|\ve_0\|^2\leq \sigma^2\bigg\}.
\end{align}
\paragraph{SDP expression of (\ref{eq:PEP_untractable_preliminaries})}
While problem~\eqref{eq:PEP_finite_dim_preliminaries} is finite-dimensional, it remains difficult to solve due to its nonconvex nature, which stems from the scalar products between iterates and (sub)gradients that appear in the interpolation conditions~\eqref{eq:FmuL_CI}. To overcome this difficulty, \cite{drori2014performance,taylor2017smooth} proposed an alternative parametrization. Rather than optimizing over the set of variables $S \cup \{\varepsilon_0\}$ directly, they introduce a representation based on 
\begin{align}\label{eq:F,G}
    &\text{(i) the vector of function value } F = [ f_\star,\ f_0, \ f_1]^\top,\\
    &\text{(ii) the \emph{Gram matrix} }G = P P^\top, \text{ where } P = \begin{bmatrix} x_\star ,\  x_0,\ x_1,\ g_\star, \ g_0 ,\ g_1 ,\ \varepsilon_0 \end{bmatrix}^\top.\nonumber
\end{align}
By construction, $G$ captures all scalar products between vectors in $P$. This leads to an equivalent reformulation of~\eqref{eq:PEP_finite_dim_preliminaries} as a SDP:
\begin{align}\label{eq:PEP_SDP_preliminaries}
    \underset{\substack{F \in \mathbb{R}^2,\\ G \in \mathbb{S}_+^6}}{\text{max}} 
   \bigg\{ 
        F^\top b_{\text{obj}}
        |\ 
        & F^\top b_\text{interp}^{ij} + \operatorname{Tr}(A_{\text{interp}}^{ij} G) \leq 0,\ \forall i,j \in \{0,1,\star\},\  \operatorname{Tr}(A_\text{init} G) \leq 1,\nonumber\\&
          \operatorname{Tr}(A_\text{opt} G) = 0, \Tr(A_{\text{method}}G)=0, \ 
          \operatorname{Tr}(A_{\varepsilon_0} G) \leq \sigma^2
    \bigg\},
\end{align}
for some matrices $A_{\text{interp}}^{ij}, A_\text{init}, A_\text{opt}, A_{\varepsilon_0}, A_{\text{method}} \in \mathbb{S}^6$ and vectors $b_{\mathrm{obj}}, b_\text{interp}^{ij}\in \mathbb{R}^3$ that encode the objective and constraints of \eqref{eq:PEP_finite_dim_preliminaries}. For instance, $b_{\text{obj}}=\e_3-\e_1$, where $\e_k\in\R^3$ is the $k^{\th}$ unit vector, hence $ F^\top b_{\text{obj}}=f_1-f_\star$, and $A_{\text{init}}=\begin{pmatrix}
    1 &-1&0&0&0&0\\
    -1 &1&0&0&0&0\\
    0 &0&0&0&0&0\\
    0 &0&0&0&0&0\\
    0 &0&0&0&0&0
\end{pmatrix}$, hence $\operatorname{Tr}(A_\text{init} G) =\|x_0-x_\star\|^2$. Maximization over the dimension $d$ is automatically satisfied, since \eqref{eq:PEP_SDP_preliminaries} is dimension-independent, hence valid for all $d$. 

Although introduced here on a simple example, the PEP framework can be generalized to obtain tight worst-case guarantees for a wide variety of optimization problems. These guarantees are obtained by solving SDPs, provided that the ingredients of the analysis—namely, the algorithm, the function class, the performance criterion, and the initial conditions—can be encoded via \emph{Gram-representable} expressions, that is, expressions that depend linearly (i) on function values and (ii) on scalar products between iterates, (sub)gradients, and additive perturbations, that include $\ve$ and $\sigma^2$, as in \eqref{eq:assum_noise}.

\begin{definition}[Gram-representable expression]\label{def:gram}
Let $N \in \mathbb{N}$, and finite sets 
$S = \{(x_i, f_i, g_i)\}_{i \in \NN\cup\star} \subset (\mathbb{R}^d \times \mathbb{R} \times \mathbb{R}^d)^{N+2}$, and $V=\{(\ve_i,\sigma^2_i)\}_{i \in[N-1]}\in(\R^d\times \R^d)^N$. An algebraic expression $p$ involving the elements of $S\cup V$ is said to be \emph{Gram-representable} if it can be written using a finite number of functions, linear in the $f_i$'s, $\sigma^2_j$'s and scalar products in $x_i, g_i, \ve_j$, $i\in[N]\cup\star$, $j\in[N-1]$, that is
\begin{align*}
&\langle g_i, g_j \rangle, \ \langle x_i, x_j \rangle, \ \langle g_i, x_j \rangle, \ \text{for all } i, j \in \NN\cup\star, \\& \langle g_i, \ve_j \rangle, \ \langle x_i, \ve_j \rangle, \ \text{for all } i\in \NN\cup\star, \ j\in[N-1], \text{ and } \\&\langle \ve_i, \ve_j \rangle,  \text{ for all } i, j \in [N-1].
\end{align*}
\end{definition}

Given a Gram-representable expression $p$, we say $(F,G)$ satisfies $p$, if it is the Gram representation \begin{align} \label{eq:equivFGSV}      
    F&:=[f_\star,\ f_0,\ldots,f_N,\ \sigma^2_0,\ldots,\sigma^2_{N-1}]^\top,\\
    G&:=PP^\top,\text{ where } P:=[x_\star,\ x_0,\ldots,x_N,\ g_\star,\ g_0,\ldots,g_N,\ve_0,\ldots,\ve_{N-1}]^\top.\nonumber
\end{align}
of a set $S\cup V$ satisfying $p$. For instance, given $k\in [N-1]$, $(F,G)$ satisfies $\|\ve_k\|^2\leq \sigma^2$ if $\vve_k^\top G\vve_k\leq \sigma^2$, where $\vve_k=\e_{2N+5+k}$.
\section{Framework for computer-aided analysis of stochastic methods} \label{sec:framework}
This section presents a framework for automated analysis of stochastic first-order methods; via the introduction of expected values as variables to PEP formulations. We first define the classes of problems on which the framework applies, before presenting the associated SDP formulation.
\subsection{Problem definition}\label{sec:prob_def}
Given $N\in \mathbb{N}$, we analyze $N$ iterations of a method $\M$ on a function class $\F$, under stochastic noise drawn from an element of a class of distributions $\Omega_N$. Performance of $\M$ is evaluated through a measure $\perf$, under an initial condition $\Ci$. We impose a number of assumptions on these objects.
\begin{assum} \label{assum:prob_def} Let $N\in \mathbb{N}$, a method $\M$, a function class $\F$, a performance measure $\perf$, a class of distributions $\Omega_N(A_1,A_2,B_1,B_2,C_1,C_2,D_1,D_2,E_1,\rho)$, and an initial condition $\Ci$. We assume the following.
    \begin{itemize}
    \item \textbf{Method $\M$} \cite[Definition 2.11]{taylor2017exact}. We consider stochastic fixed-step linear first-order methods (FSLFOM), whose $k^{\th}$ iteration on a function $f$ is given by 
    \begin{align}\label{eq:method}\tag{$\mathcal{M}$}
        x_{k+1}=x_0+\sum_{j=0}^{k}\alpha_{k,j}(g_j+\ve_j),
    \end{align}
where $g_j\in\partial f(x_j)$, $\ve_j$ is a random variable, and all coefficients $\alpha_{k,j}$ are predetermined.

Denote by $S=\{(x_j,f(x_j),g_j)\}_{j\in \NN\cup\star}$ the iterates $(x_j)_{j\in \NN}$ of $\M$, and a minimizer $x_\star$ of $f$, with associated function and (sub)gradient values.
\item \textbf{Distribution class $\Omega_N(\mathbf{P})$}. Letting $\mathbf{P}=A_1,A_2,B_1,B_2,C_1,C_2,D_1,D_2,E_1,\rho$, we consider classes of distributions $\Omega_N(\mathbf{P})$ such that any random variable $(\ve,\sigma^2)=\{(\ve_i,\sigma^2_i)\}_{i\in \lb N-1\rb}$ drawn from a distribution in $\Omega_N(\mathbf{P})$ satisfies

\hspace{0.5cm} (i) $\E[\varepsilon_k|\A_k]=0$, $\forall k\in \NN$, where $\A_k$ is the filtration of available information at iteration $k$, and

\hspace{0.5cm} (ii) $(\varepsilon, \sigma^2)$ satisfies \eqref{eq:assum_noise} w.r.t. $S$, with parameters $\mathbf{P}$. 
\item \textbf{Function class $\F$} \cite[Definition 2.2]{taylor2017exact}. We consider function classes whose interpolation conditions, see Definition \ref{def:F_interpolability}, are Gram-representable on realizations of $S$ (Definition \ref{def:gram}).
\item \textbf{Performance measure $\mathcal{P}$, and initial condition $C_{\text{init}}$} \cite[Definitions 2.3,2.4]{taylor2017exact}. We consider performance measures and initial conditions that are Gram-representable on realizations of $S$ (Definition \ref{def:gram}).\end{itemize}
\end{assum}

In what follows, we remove dependence of $\Omega_N$ in $\mathbf{P}$, to lighten notation. By an abuse of notation, we denote by $(\ve,\sigma^2)\in\Omega_N$ any pair of random variables drawn from a distribution in $\Omega_N$, whatever this specific distribution is. Observe that Assumption \ref{assum:prob_def} allows the noise to follow different distributions from query to query, i.e., $\ve_k$'s distribution may differ from that of $\ve_l$, when $l\neq k$.

The proposed framework adapts to more general settings than those satisfying Assumption \ref{assum:prob_def}, e.g., there could be more randomness than simply noisy (sub)gradient evaluations. However, Assumption \ref{assum:prob_def} simplifies the exposition of the framework, and already encompasses a broad range of settings, including, e.g., stochastic (accelerated) gradient descent on $\F_{\mu,L}$, where the initial condition and performance measure may depend on $\|x_0 - x_\star\|^2$, $\|g_0\|^2$ for $g_0 \in \partial f(x_0)$, or $f(x_0) - f(x_\star)$, with analogous expressions involving $x_N$, $g_N$, and $f(x_N)$ in the performance criterion. 

\paragraph{Central question} We seek to obtain, given satisfaction of $\Ci$, the tightest bound possible on the expected value of the performance measure $\perf$, which depends on $f$, and the iterates $(x_k)_{k\in[N\cup \star]}$ of $\M$; valid for any $f\in \F$ and $(\ve,\sigma^2) \in \Omega_N$, that is,
\begin{align}\label{eq:PEP_untractable_gen}
    \underset{\substack{d\in \N,\\f\in \F\\x_0,x_\star \in\R^d\\(\ve,\sigma^2)\in \Omega_N}}{\max} \bigg\{\E[\perf]|\ &\Ci \text{ is satisfied },  (x_k)_{k\in \NN} \text{ is generated by $\M$ w.r.t. $\ve$},\nonumber\\& 0\in \partial f(x_\star)\bigg\}.
\end{align}
Note that $x_0$ and $x_\star$ (and the associated function and (sub)gradient values) are deterministic, in contrast to $x_k$, $k\geq 1$ (and associated function and (sub)gradient values), which are stochastic. 

As an example, consider the analysis of a single iteration of \eqref{eq:SGD} on $\F_{\mu,L}$, under the additive bounded noise model.
\begin{example}[SGD under additive bounded noise model]\label{ex:running_ex}
    Let $N=1$, $\M$ be \eqref{eq:SGD}, $\F=\F_{\mu,L}$ for some $-\infty<\mu<L\leq +\infty$, $\perf=f(x_1)-f(x_\star)$, $\Ci=\|x_0-x_\star\|^2\leq 1$, and all parameters in \eqref{eq:assum_noise} be zero, except for $D_1=\sigma^2$, for some $\sigma^2\in \R$. Then, \eqref{eq:PEP_untractable_gen} amounts to \begin{align}\label{eq:PEP_untractable}
    \underset{\substack{d\in \N,\\f\in \F_{\mu,L}\\ x_0,x_\star\in\R^d,\\ \ve_0 \text{ a r.v.}}}{\text{ max }} \bigg\{\E[f(x_1)-f(x_\star)]|\ & \|x_0-x_\star\|^2\leq 1,\ x_1=x_0-\alpha (\nabla f(x_0)+\varepsilon_0),\\ & \nabla f(x_\star)=0,\ \E[\ve_0]=0, \ \E[\ve_0^2]\leq \sigma^2\bigg\}.\nonumber
\end{align}
\end{example}
\subsection{SDP relaxation of (\ref{eq:PEP_untractable_gen})}\label{sec:SDP_rel}
\paragraph{Exact reformulation of \eqref{eq:PEP_untractable_gen}}
As in the inexact or deterministic case, \eqref{eq:PEP_untractable_gen} is as such untractable, both given the infinite-dimensional variable $f\in\F$, the random variables $(\ve,\sigma^2)\in \Omega_N$, and the expectation $\E[\perf]$. We thus introduce sets of finite-dimensional variables, along with corresponding constraints that ensure maximizing over these variables is equivalent (or provides a relaxation as tight as possible) to the original problem \eqref{eq:PEP_untractable_gen}. In particular, given $f\in\F$, $x_\star\in \R^d$ a minimizer of $f$, $x_0\in \R^d$ a starting point satisfying the initial condition $\Ci$, $(\ve,\sigma^2)\in\Omega_N$, and $(x_k)_{k\in\NN}$ the set of iterates generated by $\M$ accordingly, we define
\begin{align}\label{eq:ScupV}
    S\cup V, \text{ where }& S=\{(x_j,f(x_j),g_j)\}_{j\in\NN\cup\star}, \ g_j\in \partial f(x_j), \ \forall j\in\NN\cup\star,\nonumber\\\text{ and }& V=\{(\ve_j,\sigma^2_j)\}_{j\in \lb N-1\rb}.
\end{align}
Further, under Assumption \ref{assum:prob_def}, relevant quantities in \eqref{eq:PEP_untractable_gen} are Gram-representable, hence it is convenient to consider the Gram representation of $S\cup V$, i.e., 
\begin{align}   \label{eq:FG}        
    F&:=[f(x_\star),f(x_0),\ldots,f(x_N),\ \sigma_0^2,\ldots,\sigma_{N-1}^2]^\top,\\
    G&:=PP^\top,\text{ where } P:=[x_\star,x_0,\ldots,x_N,g_\star,g_0,\ldots,g_N,\ve_0,\ldots,\ve_{N-1}]^\top.\nonumber
\end{align}
One cannot directly optimize over the stochastic quantity $(F,G)$, or over all its possible realizations, as done in \cite{kamri2023worst,taylor2019stochastic,cortild2025new}, since there might be an infinite number of them. Hence, following the approach of \cite{abbaszadehpeivasti2022convergence}, we consider \emph{expectations} of $(F,G)$ as variables of interest. Specifically, let 
\begin{align}
    (F_\E,G_\E):=&(\E[F],\E[G]),
\end{align}
that thus depend on $f,x_\star,x_0,\M$, and $(\ve,\sigma^2)$. $\EF$ contains, as elements, $\E[f(x_i)]$, and $\EG$ contains, e.g., $\E[\langle x_i,g_j\rangle]$, $\E[\langle x_i,\ve_j\rangle]$, ...,

Our goal is to maximize the expectation of the performance criterion $\perf$, which is, under Assumption \ref{assum:prob_def}, linear in realizations of $(F,G)$. Equivalently, by linearity of the expectation, it suffices to maximize the performance criterion over all possible $(F_\E,G_\E)$, generated by any $f\in\F$, $(\epsilon,\sigma)\in\Omega_N$, $x_0$ and $x_\star$. We therefore introduce the set $\Ge(\M,\F,\Omega_N,\Ci)$ of pairs $(F,G)$ that correspond to some $(\EF,\EG)$, i.e.,  
\begin{equation}\label{eq:GE}
    \Ge:= \left\{ 
    \begin{aligned}
        (F,G)\in \R^{2N+2}\times \Sym_+^{3N+4}:\ & \exists \ f\in \F,\ x_\star \in\R^d \text{ a minimizer of }f, \\ & x_0\in\R^d \text{ satisfying }\Ci,\ (\ve,\sigma^2)  \in \Omega_N,\\& \text{ and } (x_k)_{k\in\NN} \text{ the iterates of }\M
        \\\text{ s.t. } &\ (F,G)= (F_\E,G_\E)
    \end{aligned} 
    \right\},
\end{equation}
where the dependence of $\Ge$ on $\M,\F,\Omega_N$ and $\Ci$ is removed and supposed clear from the context. Then, \eqref{eq:PEP_untractable_gen} can be reformulated as
\begin{align}\label{eq:maxGE}\tag{$P_{\text{exact}}$}
    \max_{(F,G)\in \G_E} \Tr(A_{\text{obj}}G)+F^\top b_{\text{opt}},
\end{align}
where the linear terms \(\Tr(A_{\mathrm{obj}} G)\) and \(F^\top b_{\mathrm{opt}}\) encode the performance measure \(\perf\). To solve \eqref{eq:maxGE}, our goal is to represent $\Ge$, using a finite number of algebraic conditions. As in \cite{abbaszadehpeivasti2022convergence}, we first introduce conditions that are \emph{necessary}, but not sufficient, for inclusion in $\G_E$.

As a reminder, given a Gram representable condition $p$, we say $(F,G)$ satisfies $p$ if it is the Gram representation \eqref{eq:equivFGSV} of some $S=\{(x_i,f_i,g_i)\}_{i\in \NN\cup\star}$ and $V=\{(\ve_i,\sigma^2_i)\}_{i\in \lb N-1 \rb}$ satisfying $p$, see Appendix \ref{app:fullLMI} for details. The necessary conditions describing $\Ge$, given $\M,\F,\Omega_N$ and $\Ci$ satisfying Assumption \ref{assum:prob_def}, follow from
\begin{itemize}
    \item linear constraints that necessarily hold on any realization, imposed in expectation—namely, any $(F,G)\in\Ge$ should be \(\F\)-interpolable, satisfy the initial condition $\Ci$, the method definition \eqref{eq:method}, and the optimality condition $\|g_\star\|^2$; and
    \item constraints that naturally involve expectations and are Gram-representable. Specifically, these include (i) the noise variance ~\eqref{eq:assum_noise}, that is, $(F,G)\in\Ge$ should satisfy, $\forall k\in[N-1]$, and given parameters $A_1,A_2,B_1,B_2,C_1,C_2,D_1,D_2,\rho\in\R$,
    \begin{align}\tag{$\Cn$}\label{eq:Cn}
   \|\ve_k\|^2&\leq A_1(f_k-f_\star)+B_1\|g_k\|^2+C_1\|x_k-x_\star\|^2+D_1+E_1\sigma^2_k,\\
      \sigma_{k+1}^2&\leq A_2(f_k-f_\star)+B_2\|g_k\|^2+C_2\|x_k-x_\star\|^2+D_2+(1-\rho)\sigma^2_k,\nonumber
\end{align} 
as well as (ii) uncorrelation conditions involving \(\ve_k\) and \(\ve_l\) ($l\neq k)$, and $\ve_k$ and \(x_i\), \(g_i\) ($i\leq k$). Specifically, $(F,G)\in\Ge$ should satisfy
    \begin{align}\tag{$\Cind$}\label{eq:cind}
        &\langle \ve_k,\ve_l\rangle=\langle \ve_k,x_i\rangle=\langle \ve_k,g_i\rangle=0, \ \forall k \in \lb N-1 \rb,  i\ \leq k, l\neq k.
    \end{align}
\end{itemize}

For instance, letting $\vve_k=\e_{2N+5+k}$ be the selection vector accessing the element $\ve_k$ in $G$, for any $k\in[N-1]$, condition $\langle \ve_k, \ve_l\rangle=0$ writes $\vve_k^\top G \vve_l=0$. Hence, if $G=\EG$, it becomes $\E[\langle \ve_k,\ve_l\rangle]=0$, which holds by definition of $\Omega_N$. We now formalize the proposed conditions.
\begin{proposition}\label{prop:def_Gc}
   Given $N\in \N$, consider a method $\M$, a function class $\F$, a class of distributions $\Omega_N$, and an initial condition $\Ci$, satisfying Assumption \ref{assum:prob_def}. In addition, let $\G_E$ be defined in \eqref{eq:GE}. Then, any $(F,G)\in \Ge$ is $\F$-interpolable, and satisfies $\Ci$, $\Cn$, $\Cind$, $\|g_\star\|^2=0$, and \eqref{eq:method}.
\end{proposition}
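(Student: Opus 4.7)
The plan is to unpack the definition of $\Ge$ in \eqref{eq:GE} and verify, condition by condition, that each listed property is preserved when passing from the stochastic Gram representation of an actual run to its expectation. By definition, any $(F,G)\in\Ge$ can be written as $(\E[\tilde F],\E[\tilde G])$, where $(\tilde F,\tilde G)$ is the stochastic Gram representation, defined as in \eqref{eq:FG}, of a set $\tilde S\cup \tilde V$ generated by some $f\in\F$, a minimizer $x_\star$, a starting point $x_0$ satisfying $\Ci$, and a noise realization $(\ve,\sigma)\in\Omega_N$ run through $\M$. The overarching principle of the proof is that each claimed condition is Gram-representable (hence linear in the pair $(F,G)$), so almost-sure satisfaction on realizations transfers to satisfaction in expectation.

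First, under Assumption \ref{assum:prob_def}, the $\F$-interpolation conditions are Gram-representable on realizations of $S$ (for instance, \eqref{eq:FmuL_CI} in the case of $\F_{\mu,L}$), hence take the form of linear inequalities in $(\tilde F,\tilde G)$ that hold almost surely because $\tilde S$ is, by construction, sampled from a function in $\F$. Linearity of expectation then gives that $(F,G) = (\E[\tilde F], \E[\tilde G])$ still satisfies those linear inequalities, i.e., is $\F$-interpolable in the Gram sense. The same linearity argument, applied to the Gram-representable quantities $\Ci$, $\|g_\star\|^2=0$, and \eqref{eq:method} (whose coefficients are predetermined and whose variables $x_0, x_\star, g_\star$ are deterministic while the remaining ones appear linearly), shows that these properties transfer to $(F,G)$ as well.

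Next, the variance condition $\Cn$ is exactly the inequality \eqref{eq:assum_noise} restated on a realization. By definition of $\Omega_N$, \eqref{eq:assum_noise} holds conditionally on $\A_k$; since $x_k$, $g_k$, $f(x_k)-f(x_\star)$, and $\sigma_k^2$ are all $\A_k$-measurable, applying the tower property yields the same linear inequality when all terms are replaced by their full expectations, which is precisely the statement that $(F,G)$ satisfies $\Cn$. For the uncorrelation condition $\Cind$, fix $k\in\lb N-1\rb$ and any $i\leq k$; the vectors $x_i$, $g_i$, $\sigma_i$ (and $\ve_l$ for $l<k$) are $\A_k$-measurable, so
\begin{equation*}
\E[\langle \ve_k, x_i\rangle\mid \A_k] = \langle \E[\ve_k\mid \A_k], x_i\rangle = 0,
\end{equation*}
by the zero-mean assumption (i) in Assumption \ref{assum:prob_def}, and similarly for pairings with $g_i$, $\sigma_i$, and $\ve_l$. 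Taking full expectations gives the vanishing scalar products claimed in $\Cind$; the case $l>k$ is handled symmetrically by conditioning on $\A_l$.

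The main subtlety is the measurability bookkeeping in the verification of $\Cind$: one must confirm that every object paired with $\ve_k$ lies in the appropriate $\sigma$-algebra (with $\A_k$ as defined in Section~\ref{sec:introduction}), and handle the cross-terms $\langle \ve_k,\ve_l\rangle$ by conditioning on whichever of $\A_k$ or $\A_l$ renders the other factor measurable. Once this is laid out, every required condition follows directly from the linearity of expectation together with the defining properties of $\Omega_N$ and $\M$.
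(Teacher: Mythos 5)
Your proof is correct and follows essentially the same path as the paper's: unpack $\Ge$ to get a stochastic Gram pair $(\tilde F,\tilde G)$, invoke linearity of expectation for the almost-surely-satisfied Gram-representable constraints ($\F$-interpolability, $\Ci$, $\|g_\star\|^2=0$, \eqref{eq:method}), and derive $\Cn$ and $\Cind$ from \eqref{eq:assum_noise} and the tower property applied to $\E[\ve_k\mid\A_k]=0$. Your measurability bookkeeping for $\Cind$ (including the $l>k$ case handled by conditioning on $\A_l$) makes explicit what the paper states more briefly, but the argument is the same.
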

\begin{proof}
    Let $(\EF,\EG)\in \Ge$. Denote by $(\tilde F, \tilde G)$ the stochastic pair for which $F=\E[\tilde F]$, $G=\E[\tilde G]$.

By definition of $\Ge$, given in \eqref{eq:GE}, there exist some $(\ve,\sigma^2)  \in \Omega_N,\ f\in \F$, $ x_\star \in\R^d \text{ minimizer of }f$, $x_0\in\R^d \text{ satisfying }\Ci$, and $(x_k)_{k\in\NN}$ the associated iterates of $\M$, such that $(\tilde F, \tilde G)$ is the Gram representation \eqref{eq:FG} of \break $S=\{(x_j,f(x_j),g_j)\}_{j\in \NN\cup \star}$, where $g_j\in\partial f(x_j)$, and $V=\{(\ve_j,\sigma_j^2)\}_{j\in \lb N-1 \rb}$. Hence, any realization of $(\tilde F,\tilde G)$ is $\F$-interpolable, and satisfies $\Ci$, $\|g_\star\|^2=0$ and \eqref{eq:method}. By linearity of these conditions in the realizations of $(\tilde F,\tilde G)$, and linearity of an expectation, it therefore holds that $(\EF,\EG)$ is $\F$-interpolable and satisfies $\Ci$. 

Further, satisfaction of $\Cn$ by $(\EF,\EG)$ follows straightforwardly from satisfaction of \eqref{eq:assum_noise} by $(\ve,\sigma^2)\in\Omega_N$. In addition, $\langle \ve_k,\ve_l\rangle=0$, $k\neq l$, and $\langle \ve_k,x_i\rangle=\langle \ve_k,g_i\rangle=0$, $i\leq k$, follows from $\E[\ve_k|\A_k]=0$. Indeed, then, e.g., by the law of total expectation, $\E[\langle \ve_k,x_i\rangle]=\E[\E[\langle \ve_k,x_i\rangle|\A_k]]=\E[\langle \E[\ve_k|\A_k],x_i\rangle]=0$, and the same holds for the other variables in $\A_k$.
\end{proof}

The necessary conditions introduced in Proposition \ref{prop:def_Gc} differ from the ones proposed in \cite{abbaszadehpeivasti2022convergence}, under the coordinate descent setting, in the condition $\Cind$. Indeed, in \cite{abbaszadehpeivasti2022convergence}, the only uncorrelation conditions imposed are $\langle \ve_k,x_k\rangle=\langle \ve_k,g_k\rangle=0$, $k\in [N-1]$.
\paragraph{SDP relaxation of (\ref{eq:PEP_untractable_gen})}
Relying on Proposition \ref{prop:def_Gc}, we introduce a set $\Gc$ satisfying $\G_E\subseteq\G_c$. 
\begin{align}\label{eq:GC}
        \G_c:= \bigg\{    (F,G)\in \R^{2N+2}\times \Sym_+^{3N+4}:\  &(F,G) \text{ is $\F$-interpolable, and satisfies } \\&\text{$\Ci$, \eqref{eq:method}, $\|g_\star\|^2=0$},\ \text{$\Cind$ and $\Cn$} \bigg\}.\nonumber
\end{align}
Under Assumption \ref{assum:prob_def}, this set is convex, since characterized by a set of linear conditions on $(F,G)$. In addition, it allows for a SDP relaxation of \eqref{eq:PEP_untractable_gen}, given by 
\begin{align}\label{eq:relax}\tag{$P_{\text{relax}}$}
    \max_{(F,G)\in \G_c} \Tr(A_{\text{obj}}G)+F^\top b_{\text{opt}},
\end{align}
where the linear terms \(\Tr(A_{\mathrm{obj}} G)\) and \(F^\top b_{\mathrm{opt}}\) encode the performance measure \(\perf\). Indeed, by Proposition \ref{prop:def_Gc}, $\G_E\subseteq \Gc$, hence the bound resulting from \eqref{eq:maxGE} is smaller than the one resulting from \eqref{eq:relax}.
\begin{example}
    Consider the setting of Example \ref{ex:running_ex}, i.e., a single iteration of \eqref{eq:SGD} on $\F_{\mu,L}$, under additive bounded noise. Then, \eqref{eq:relax} can be expressed as
\begin{align}\label{eq:PEP_SDP_ex}
    \underset{\substack{F \in \mathbb{R}^3,\\ G \in \mathbb{S}_+^6}}{\max} 
    \bigg\{ 
        F^\top b_{\mathrm{obj}} 
        |\ 
        & F^\top b_\text{interp}^{ij} + \operatorname{Tr}(A_{\text{interp}}^{ij} G) \leq 0,\ \forall i,j \in \{0,1,\star\},\  \operatorname{Tr}(A_\text{init} G) \leq 1,\\&
          \operatorname{Tr}(A_\text{opt} G) = 0,\ \Tr(A_{\text{method}}G)=0, \nonumber\\& 
          \operatorname{Tr}(A_{\varepsilon_0} G) \leq \sigma^2, \  \Tr(A_{\ve_0x_0}G)=\Tr(A_{\ve_0g_0}G)=0
    \bigg\},\nonumber
\end{align}
where $A_{ij}, A_0, A_\star, A_{\varepsilon_0} \in \mathbb{S}^6$, and $b_{\mathrm{obj}}, b_{ij} \in \mathbb{R}^3$ are defined as in \eqref{eq:PEP_SDP_preliminaries}, and \break $A_{\ve_0x_0}, A_{\ve_kg_0}$ encode $\Cind$. An explicit formulation of \eqref{eq:PEP_SDP_ex} is provided in Appendix \ref{app:fullLMI}.

Observe that ~\eqref{eq:PEP_SDP_ex} is equivalent to~\eqref{eq:PEP_SDP_preliminaries}, except for the independence conditions. The key difference lies in the interpretation of the variables: while~\eqref{eq:PEP_SDP_preliminaries} operates on the (deterministic) Gram representation of \(S \cup V\), the variables \((F,G)\) in~\eqref{eq:PEP_SDP_ex} represent the \emph{expectation} of this Gram representation. As a result, even when relaxing the exact stochastic formulation \eqref{eq:PEP_untractable_gen}, working with expectations directly allows for potentially tighter worst-case bounds than those derived from deterministic analyses.
\end{example}
\begin{remark}
    Although the variables $(\ve_k, \sigma_k^2)$ represent stochastic noise, $\G_c$ interprets them as deterministic entities satisfying \eqref{eq:assum_noise}, and orthogonal to all prior variables $x_i,g_i$, where $i\leq k$, and $\ve_i$, $i<k$.
\end{remark}

Elements $(G, F) \in \G_c$ are intended to represent expectations of the Gram representation of the set $S \cup V$ as defined in~\eqref{eq:ScupV}, but $\G_c$ only encodes necessary conditions for this to hold. In particular, $\G_c$ may include pairs $(G, F)$ that do not correspond to any valid expected Gram representation. This situation is analogous to the case where only necessary conditions for $\F$-interpolability are imposed, leading to sets that may contain elements not consistent with any function in $\F$. As an example, we present some of the potential sources of relaxation in \eqref{eq:relax}, arising from the difference between $\G_E$ and $\Gc$.
\paragraph{Sources of relaxation in \eqref{eq:relax}}
\begin{description}
    \item[Compensation across realizations.] In $\G_E$, the constraints $\Ci$, $\|g_\star\|=0$, \eqref{eq:method}, and $\F$-interpolability are satisfied by the union of all realizations of $(F,G)$, generated by a noise realization $\ve^{(i)}$. By contrast, in $\Gc$, these conditions are only required to hold in expectation, allowing for compensation across different realizations. As a result, some realizations may individually violate one of these constraints, while their average satisfies them.
    
    \item[Realizations interpolated by different functions.] Even if each realization of $(F,G)$ is $\F$-interpolable, they may be consistent with different functions in $\F$, rather than a single common one. Belonging to $\G_E$ requires that a single function $f \in \F$ simultaneously interpolates all realizations.
\end{description}
\section{Applications: performance analysis of SGD on $\F_{\mu,L}$}\label{sec:num_ill}
This section relies on the framework introduced in Section \ref{sec:framework} to analyze \eqref{eq:SGD} on $\mathcal{F}_{\mu,L}$, under several noise models. We compare the resulting convergence rates with existing guarantees, obtained classically or in a computer-aided way. We demonstrate both numerically (Figures \ref{fig:absolute_noise_cst_stepsize}, \ref{fig:sum_of_functions}, \ref{fig:RCD} and \ref{fig:SAGA}) and analytically (Theorem \ref{thm:absolute_noise_cst_stepsize} and Proposition \ref{prop:bound_A1_C1}) how the proposed framework improves on existing bounds derived given, as only information on the noise distribution, its expectation, variance, and uncorrelation conditions. 

However, as compared to the all-scenarios approach introduced in \cite{taylor2019stochastic}, which uses, when available, more information on the distribution of the noise than its expectation, variance, and uncorrelation conditions, and hence applies to a more restrictive set of problems, the proposed framework may yield worse guarantees, as it is typically the case for linear convergence rates (Figures \ref{fig:sum_of_functions}, \ref{fig:SAGA}). On the other hand, the all-scenarios approach is restricted to the analysis of finite-support noise settings and, mainly, to a Lyapunov function approach. In addition, it yields better rates only if further information on the noise distribution is available, e.g., in the finite-sum and the block coordinate settings. Hence, on (i) noise models defined only via their expectation, variance, and uncorrelation, e.g., the absolute bounded model (Figure \ref{fig:absolute_noise_cst_stepsize}), or (ii) problems with sublinear convergence rates (Figure \ref{fig:RCDb}), the proposed framework improves on existing results yielded by the all-scenarios approach.

In addition, the framework significantly improves on rates obtained via the worst-scenario approach, i.e., under deterministic perturbation (Figures \ref{fig:absolute_noise_cst_stepsize} and \ref{fig:absolute_noise_cst_stepsize}). This demonstrates the necessity, in situations where the perturbation is in fact stochastic rather than deterministic, to analyze methods under a stochastic framework.

\paragraph{Additive bounded noise model.}  
Consider the maybe simplest setting, i.e., \eqref{eq:SGD} with constant stepsizes $\alpha$, under the additive bounded noise model, i.e., $\E[\|\ve_k\|^2] \leq \sigma^2$. Figure \ref{fig:absolute_noise_cst_stepsize} compares various convergence rates, in both the convex and the strongly convex regimes. Specifically, it displays the worst-case scenario, or equivalently the analysis of \eqref{eq:SGD} under a deterministic bounded perturbation; the PEP-based bounds obtained via the single realization and two realizations approaches; and, in the convex case, the all-scenarios PEP-based bound of \cite[Theorem 5]{taylor2019stochastic} for $\alpha=\frac{1}{L}$, that is
\begin{align}
    \E[f(x_N)-f_\star]\leq \frac{L}{2N}\|x_0-x_\star\|^2+\frac{N+3}{4L}\sigma^2.
\end{align}
In addition, in the strongly convex case, it displays an existing bound from \cite[Theorem 4.1]{gorbunov2020unified}, that is, \begin{align}
    E[\|x_N-x_\star\|^2]\leq \phi\|x_0-x_\star\|^2+\frac{\sigma^2\alpha}{\mu}, \text{ where } \phi=1-\mu \alpha.
\end{align}
This bound is derived based on the weaker assumption $E[\|\ve_k\|^2|\A_k]\leq2L(f(x_k)-f(x_\star))+\sigma^2$, satisfied by in the additive bounded noise setting, see, e.g., \cite[Example 2]{stich2019unified}.

\begin{figure}[ht!]
  \centering
  \begin{subfigure}[t]{0.49\textwidth}
    \centering
    \includegraphics[width=\linewidth]{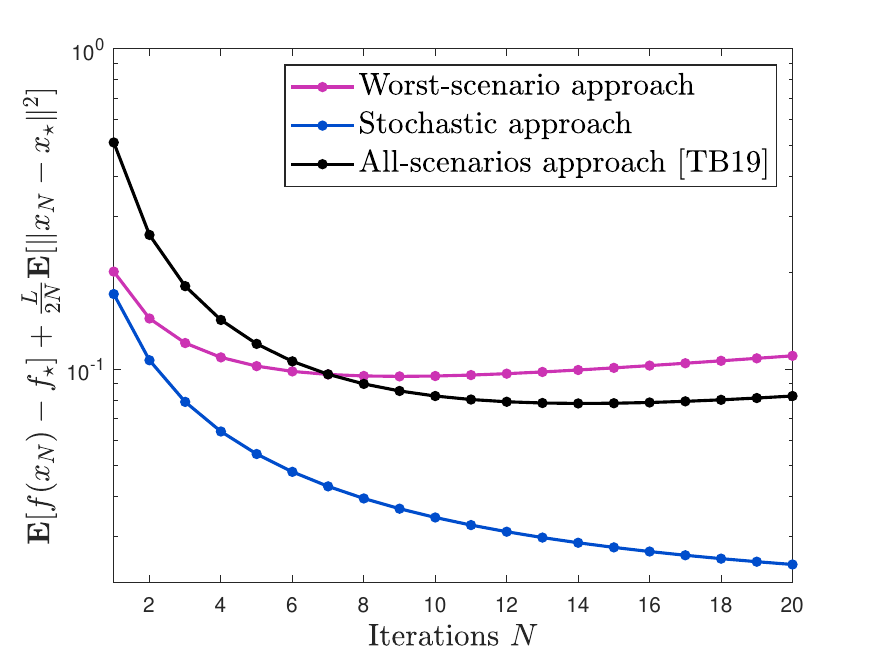}
    \caption{Computer-aided convergence analyses of \eqref{eq:SGD} with constant sepsizes $\alpha=\frac{1}{L}$ on $\F_{0,L}$ under the additive bounded noise model, with performance measure $\E[f(x_N)-f_\star]$, and initial condition $\|x_0-x_\star\|\leq 1$, for $L=1$.}
  \end{subfigure}
  \hfill
  \begin{subfigure}[t]{0.49\textwidth}
    \centering
    \includegraphics[width=\linewidth]{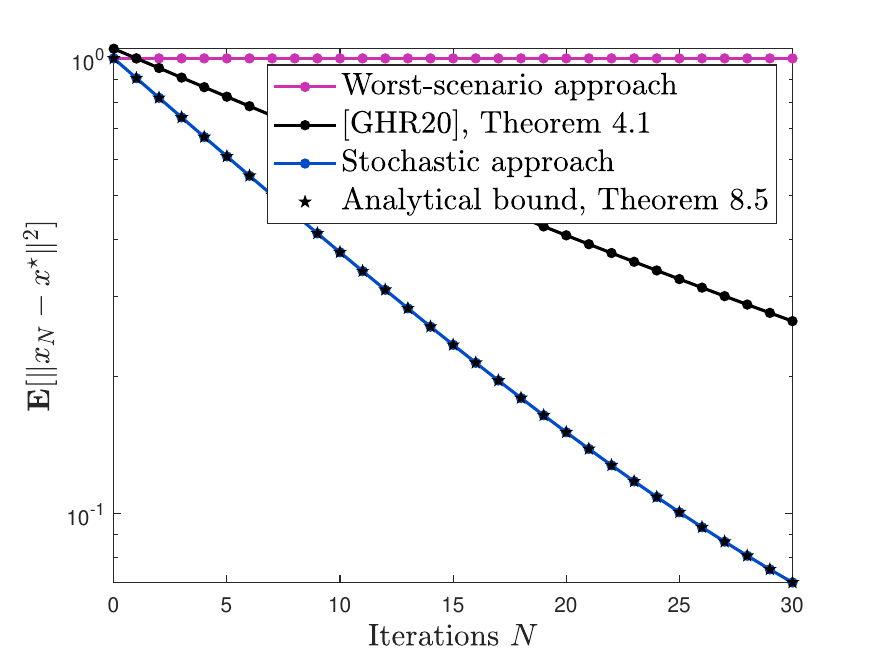}
    \caption{Convergence analyses of \eqref{eq:SGD} with constant sepsizes $\alpha=\frac{1}{2L}$ on $\F_{\mu,L}$, $\mu=0.1$, under the additive bounded noise model, with performance measure $\E[\|x_N-x_\star\|^2]$, and initial condition $\|x_0-x_\star\|\leq 1$, for $L=1$.}
  \end{subfigure}
  \caption{Comparison of computer-aided convergence analyses of \eqref{eq:SGD} with constant stepsizes $\alpha$, on $\F_{0,L}$ (convex $L$-smooth functions, left-hand side figure), and $\F_{\mu,L}$ ($\mu$-strongly convex $L$-smooth functions, $\mu>0$, right-hand side figure), under the additive bounded noise model of variance $\sigma^2=0.01$. Displayed are the worst-scenario PEP-based bound (pink), the PEP-based bound from the framework presented in Section \ref{sec:framework} (blue), the all scenarios PEP-based bound from \cite{taylor2019stochastic}(black), the bound from \cite[Theorem 4.1]{gorbunov2020unified}(black), and the analytical bound from Theorem \ref{thm:absolute_noise_cst_stepsize} ($\star$).}
  \label{fig:absolute_noise_cst_stepsize}
\end{figure}

The bound computed via the proposed framework outperforms \cite[Theorem 1]{moulines2011non} and \cite[Theorem 5]{taylor2019stochastic}. Although \cite[Theorem 5]{taylor2019stochastic} derives from an all-scenarios formulation, it considers as performance measure a slightly different setting, that is, $\E[f(x_N)-f(x_\star)+\frac{L}{2N}\|x_N-x_\star\|^2]$, from which we infer a bound on $\E[f(x_N)-f(x_\star)]$. This explains why it is initially outperformed by the worst-scenario guarantee. 

Finally, in the strongly convex case, relying on numerical insight, we identify an analytical convergence rate that exactly matches the numerical bound from \eqref{eq:relax}, and prove its tightness.
\begin{theorem}\label{thm:absolute_noise_cst_stepsize}
  Let $N\in \N$, $0<\mu<L$, $\sigma^2 \in \R$, $\alpha \in (0,\frac{2}{L+\mu}]$. Consider $N$ iterations of \eqref{eq:SGD} with constant stepsize $\alpha_k=\alpha$, $k=1,\ldots,N$, applied to a function $f\in \F_{\mu,L}$, under additive bounded noise $(\ve_k)_{k\in \lb N-1\rb}$ satisfying $\E[\ve_k|\A_k]=0$, and $\E[\ve_k|\A_k]^2\leq \sigma^2$. Let $(x_k)_{k\in \NN}$ be the iterates of \eqref{eq:SGD}, and $x_\star$ denote a minimizer of $f$. Then, \begin{align}
      E[\|x_{N}-x_\star\|^2]\leq \phi^{2N}\|x_0-x_\star\|^2+\frac{1-\phi^{2N}}{1-\phi^2} \alpha^2\sigma^2,
  \end{align}  
  where $\phi=(1-\mu\alpha)$. In addition, this bound is tight.
\end{theorem}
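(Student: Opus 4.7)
The plan is to reduce the stochastic recursion to a standard one-step contraction for deterministic gradient descent on $\F_{\mu,L}$, decoupling the gradient and noise contributions via the zero-mean assumption, and then unroll by induction.

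First, I would fix $k \in [N-1]$, expand
\begin{align*}
\|x_{k+1}-x_\star\|^2 = \|x_k-\alpha\nabla f(x_k)-x_\star\|^2 - 2\alpha\langle x_k-\alpha\nabla f(x_k)-x_\star,\ve_k\rangle + \alpha^2\|\ve_k\|^2,
\end{align*}
and take conditional expectation given $\A_k$. Since $x_k$ and $\nabla f(x_k)$ are $\A_k$-measurable and $\E[\ve_k|\A_k]=0$, the cross term vanishes, and $\E[\|\ve_k\|^2|\A_k]\le\sigma^2$ by assumption, giving
\begin{align*}
\E[\|x_{k+1}-x_\star\|^2|\A_k] \le \|x_k-\alpha\nabla f(x_k)-x_\star\|^2 + \alpha^2\sigma^2.
\end{align*}

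Next, for the deterministic term I would invoke the classical contraction of one gradient step on $\F_{\mu,L}$: for any $\alpha \in (0,\tfrac{2}{L+\mu}]$ and any $y\in\R^d$, $\|y-\alpha\nabla f(y)-x_\star\|^2 \le (1-\mu\alpha)^2\|y-x_\star\|^2 = \phi^2\|y-x_\star\|^2$, which follows from $\nabla f(x_\star)=0$ together with the co-coercivity-type interpolation inequality \eqref{eq:FmuL_CI} applied between $y$ and $x_\star$ (equivalently, from the fact that $\max(|1-\alpha\mu|,|1-\alpha L|)=\phi$ on the prescribed stepsize range). Plugging this in yields
\begin{align*}
\E[\|x_{k+1}-x_\star\|^2|\A_k] \le \phi^2\|x_k-x_\star\|^2 + \alpha^2\sigma^2.
\end{align*}

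Finally, I would take total expectation (tower property) to obtain the scalar recursion $u_{k+1}\le \phi^2 u_k + \alpha^2\sigma^2$ with $u_k:=\E[\|x_k-x_\star\|^2]$, and unroll it:
\begin{align*}
\E[\|x_N-x_\star\|^2]\le \phi^{2N}\|x_0-x_\star\|^2 + \alpha^2\sigma^2\sum_{j=0}^{N-1}\phi^{2j} = \phi^{2N}\|x_0-x_\star\|^2 + \frac{1-\phi^{2N}}{1-\phi^2}\alpha^2\sigma^2,
\end{align*}
which is the claim. There is no genuine obstacle here; the only delicate point is ensuring that the stepsize restriction $\alpha\le 2/(L+\mu)$ is exactly what guarantees the contraction constant $\phi=1-\mu\alpha$ (rather than the larger $|1-L\alpha|$), and that $\E[\ve_k|\A_k]=0$ with $x_k,\nabla f(x_k)$ being $\A_k$-measurable is what allows the cross term to disappear—both are standard but worth stating cleanly.
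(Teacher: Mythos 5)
Your proposal is correct and follows essentially the same route as the paper: establish the one-step contraction $\|x_k-\alpha\nabla f(x_k)-x_\star\|^2\le\phi^2\|x_k-x_\star\|^2$ for the noiseless part, kill the cross term by conditioning on $\A_k$ and using $\E[\ve_k|\A_k]=0$, add the variance bound, and unroll the scalar recursion. The only difference is cosmetic: the paper re-derives the contraction explicitly by taking a specific nonnegative linear combination (multiplied by $2\alpha\phi$) of the two interpolation inequalities \eqref{eq:FmuL_CI} between $x_k$ and $x_\star$, whereas you invoke it as the standard contraction of $I-\alpha\nabla f$ on $\F_{\mu,L}$; both routes yield the same factor $\phi^2=(1-\mu\alpha)^2$ on the stated stepsize range.
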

\begin{proof}
The first part of the proof follows exactly \cite[Section 2.8.1]{taylor2024towards}, which addresses the noise-free case. Let $S=\{(x_i,f_i,g_i)\}_{i\in\NN\cup\star}$ denote the set of iterates (and minimizer) and associated function and gradient values. We start by analyzing a single iteration of \eqref{eq:SGD}, and claim the following.
\begin{align}\label{eq:claim1}
      \|x_{k+1}-x_\star\|^2\leq \phi^{2}\|x_k-x_\star\|^2+\alpha^2 \sigma^2.
  \end{align}  
By Proposition \ref{prop:FmuL_CI} describing smooth strongly convex functions, and since $g_\star=0$, it holds
    \begin{align*}
        &f_\star\geq f_k+\langle g_k,x_\star-x_k\rangle +\frac{L}{2(L-\mu)}\bigg(\frac{\|g_k\|^2}{L}+\mu\|x_k-x_\star\|^2-\frac{2\mu}{L}\langle g_k,x_k-x_\star\rangle\bigg).\\
        &f_k\geq f_\star +\frac{L}{2(L-\mu)}\bigg(\frac{\|g_k\|^2}{L}+\mu\|x_k-x_\star\|^2-\frac{2\mu}{L}\langle g_k,x_k-x_\star\rangle\bigg).
    \end{align*}
Summing these inequalities, multiplied by $2\alpha\phi$, yields
\begin{align*}
&-\frac{2\alpha\phi(L+\mu)}{L-\mu} \langle g_k,x_k-x_\star\rangle +\frac{2\alpha\phi}{L-\mu}\|g_k\|^2+ \frac{2\alpha\phi\mu L}{L-\mu}\|x_k-x_\star\|^2\leq 0\\
   \Leftrightarrow& \|x_k-x_\star\|^2-2\alpha\langle g_k, x_k-x_\star\rangle+\alpha^2\|g_k\|^2\leq \phi^2\|x_k-x_\star\|^2\\&\hspace{6cm}-\frac{\alpha (2-\alpha(L+\mu))}{L-\mu}\|\mu (x_k-x_\star)-g_k\|^2\\
    &\hspace{6cm}\leq \phi^2\|x_k-x_\star\|^2,
\end{align*}
where the second inequality follows from the domain of $\alpha$. Hence,
\begin{align*}
    &\|x_k-x_\star\|^2-2\alpha\langle x_k-x_\star, g_k+\ve_k\rangle+\alpha^2\|g_k+\ve_k\|^2
    \\\leq& \phi^2\|x_k-x_\star\|^2 -2\alpha\langle x_k-x_\star, \ve_k\rangle+\alpha^2\big(2\langle g_k,\ve_k\rangle+\|\ve_k\|^2\big)\\
    \Leftrightarrow&\|x_{k+1}-x_\star\|^2  \leq \phi^2\|x_k-x_\star\|^2 -2\alpha\langle x_k-x_\star, \ve_k\rangle+\alpha^2\big(2\langle g_k,\ve_k\rangle+\|\ve_k\|^2\big).
\end{align*}
Taking the expectation with respect to $\ve_k$ (conditioned on $x_k$) yields
\begin{align*}
    \E_{\ve_k}[\|x_{k+1}-x_\star\|^2] &\leq \phi^2\|x_k-x_\star\|^2 +\alpha^2 \E_{\ve_k}[\|\ve_k\|^2]\leq \phi^2\|x_k-x_\star\|^2 +\alpha^2 \sigma^2,
\end{align*}
by assumption on $\ve_k$. Hence, \eqref{eq:claim1} holds. By recursion, it holds
\begin{align*}
    \E[\|x_{N}-x_\star\|^2]\leq \phi^{2N}\|x_0-x_\star\|^2+\sum_{i=0}^{N-1}\phi^{2i} \alpha^2 \sigma^2.
\end{align*}
Since, for $r\neq1$, $\sum_{i=0}^{N-1} r^i=\frac{1-r^N}{1-r}$, this concludes the derivation of the bound. 

To conclude the proof, observe that the function $f(x)=\frac{\mu}{2}\|x\|^2$ and any distribution satisfying $\E[\ve_k]=\sigma^2, \ k=0,1,\ldots,N$, attain the bound. Indeed, then, $x_\star=0$, and
\begin{align*}
    x_{k+1}=\phi x_k -\alpha \ve_k.
\end{align*}
Hence, 
\begin{align*}
   & (x_{k+1}-x_\star)^2=(\phi (x_k-x_\star) -\alpha \ve_k)^2=\phi^2(x_k-x_\star)^2-2\alpha\phi \ve_k(x_k-x_\star)+\alpha^2\ve_k^2 \\
    \Leftrightarrow& \E[(x_{k+1}-x_\star)^2|\A_k]=\phi^2(x_k-x_\star)^2+\alpha^2\sigma^2.
\end{align*}
\end{proof}
\paragraph{Finite-sum setting: on the importance of the parameters in \eqref{eq:assum_noise}.}
We now turn to the analysis of \eqref{eq:SGD} with constant stepsizes, under noise satisfying a variance condition of the form\begin{align}
    \E[\|\ve_k\|^2|\A_k] \leq A_1(f(x_k) - f(x_\star)) + D_1.\label{eq:assum_finite_sum}
\end{align}
This includes, in particular, the finite-sum setting where \(f = \sum_i f_i\) is a sum of convex \(L_i\)-smooth functions with \(L = \max_i L_i\), and the variance at the optimum is bounded, i.e., \(\sum_i \|\nabla f_i(x_\star)\|^2 := \sigma_\star^2 < +\infty\). Indeed, as shown in, e.g.,~\cite[Lemma 4.20]{garrigos2023handbook}, this setup satisfies~\eqref{eq:assum_finite_sum} with \(A_1 = 4L\) and \(D_1 = 2\sigma_\star^2\). Moreover, when \(\sigma_\star^2 = 0\), e.g., in overparametrized models \cite{ma2018power}, tighter constants can be used: \(A_1 = 2L\), \(D_1 = 0\); see~\cite[Lemma 4.19]{garrigos2023handbook}.  

In \cite[Theorem 4.1]{gorbunov2020unified}, a convergence rate is derived on the basis of \eqref{eq:assum_finite_sum}, that is, using only the expectation, variance, and uncorrelation properties of the noise. This bound is given by
\begin{align}
      E[\|x_{N}-x_\star\|^2]\leq \phi^{N}\|x_0-x_\star\|^2+\frac{\alpha^2D_1}{\alpha\mu}, \text{ where } \phi=(1-\mu\alpha).
  \end{align} 
On the other hand, the finite-sum setting was analyzed using the exact all-scenarios approach in~\cite{taylor2019stochastic} and~\cite{cortild2025new}, using all information available on the noise distribution (hence, for a more restrictive setting than \eqref{eq:relax}). In particular, \cite[Theorem 3.3]{cortild2025new} shows 
\begin{align}
      E[\|x_{N}-x_\star\|^2]\leq \phi^{N}\|x_0-x_\star\|^2+\frac{1-\phi^N}{1-\phi}\frac{\alpha^2D_1}{2}\bigg(1+\frac{\alpha (L- \mu)}{2-(L+\mu)\alpha}\bigg), 
  \end{align} 
where $\phi=(1-\mu\alpha)^2$.
Figure~\ref{fig:sum_of_functions} compares the proposed framework with these existing rates from ~\cite[Theorem 4.1]{gorbunov2020unified} and~\cite[Theorem 3.3]{cortild2025new}. The right-hand side of Figure~\ref{fig:sum_of_functions} illustrates the impact of the parameter choice in the variance model. For instance, in the case \(\sigma_\star^2 = 0\), using \(A_1 = 4L\) (as in the general case) rather than the sharper \(A_1 = 2L\) yields looser performance guarantees.

\begin{figure}[ht!]
  \centering
  \begin{subfigure}[t]{0.49\textwidth}
    \centering
    \includegraphics[width=\linewidth]{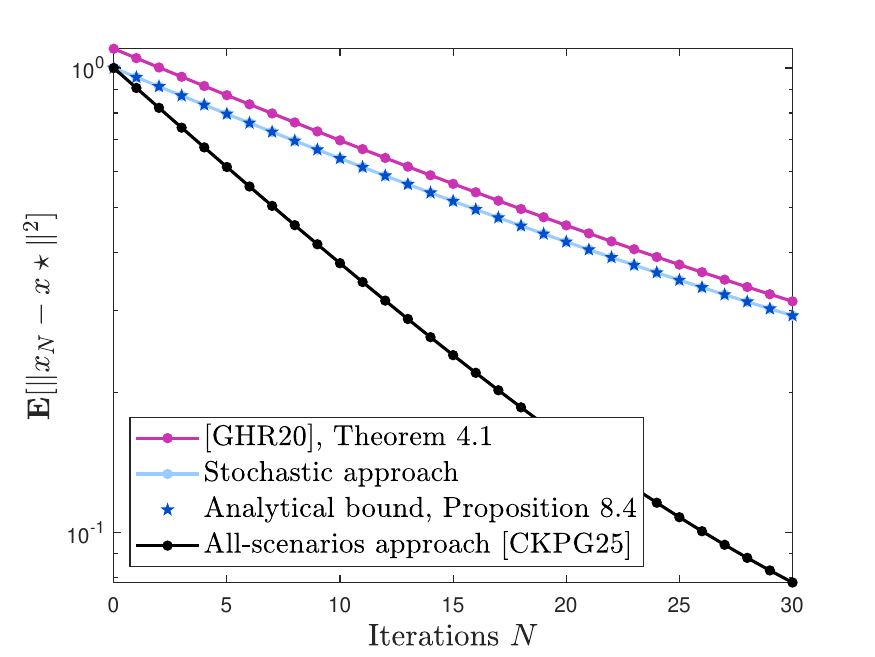}
  \end{subfigure}
  \hfill
  \begin{subfigure}[t]{0.49\textwidth}
    \centering
    \includegraphics[width=\linewidth]{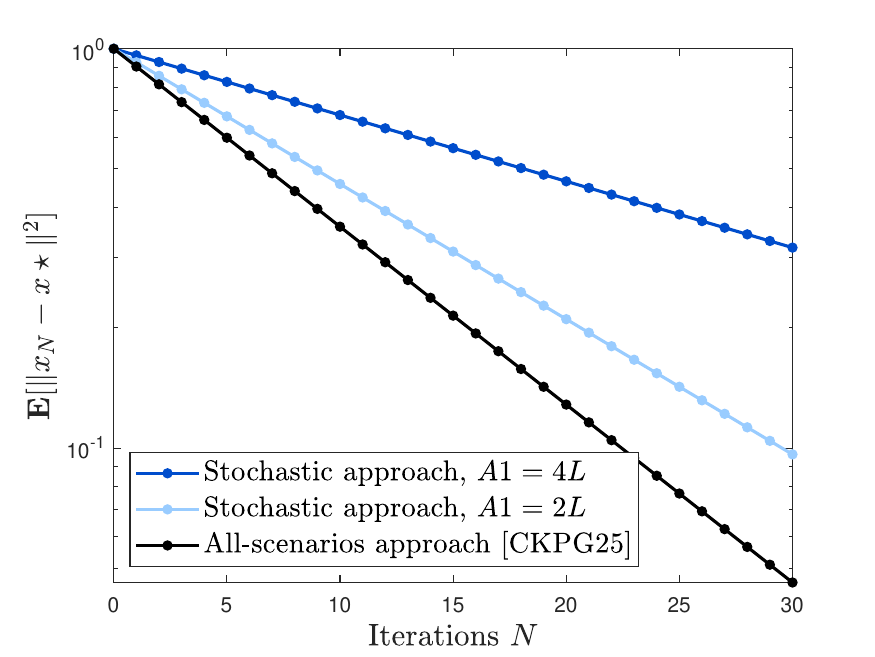}
  \end{subfigure}
  \caption{Comparison of convergence analyses of \eqref{eq:SGD} with constant stepsizes $\alpha=\frac{1}{2L}$, on $\F_{\mu,L}$, under noise satisfying $\E[\ve_k|\A_k]=0$, $\E[\|\ve_k\|^2|\A_k]\leq A_1(f(x_k)-f_\star)+D_1$. The performance criterion is $\E[\|x_N - x_\star\|^2]$, with initial condition $\|x_0 - x_\star\| \leq 1$, and parameters $\mu=0.1$, $L=1$, and $D_1=0.01$ (left-hand side figure) or $D_1=0$ (right-hand side figure). Displayed are the bound from \cite[Theorem 4.1]{gorbunov2020unified}, based solely on the expectation, variance, and uncorrelation of the noise (pink), the PEP-based bounds from \eqref{eq:relax} (blue), the analytical bound from Proposition \ref{prop:bound_A1_C1} ($\star$), and the exact all-scenarios PEP-based bound from \cite[Theorem 3.3]{cortild2025new}, considering the more restrictive setting of finite-sum optimization (black).}
  \label{fig:sum_of_functions}
\end{figure}
The bound \cite[Theorem 4.1]{gorbunov2020unified} is obtained by (i) showing satisfaction, for all $k\in[N-1]$, of
 \begin{align}\label{eq:gorbu}
      E[\|x_{k+1}-x_\star\|^2]\leq \phi\|x_k-x_\star\|^2+\alpha^2D_1, \text{ where } \phi=(1-\mu\alpha)\ \text{\cite[Lemma C1]{gorbunov2020unified}},
  \end{align}  
and (ii) recursively applying \eqref{eq:gorbu}. In the computations, $\sum_{i=0}^{N-1} \phi^i$ is relaxed into $\frac{1}{1-\phi}$. We strengthen \cite[Theorem 4.1]{gorbunov2020unified} by removing this relaxation.
\begin{proposition}\label{prop:bound_A1_C1}
      Let $N\in \N$, $0<\mu<L$, $A_1,D_1\in \R$, $\alpha \in (0,\frac{2}{A_1}]$. Given a function $f\in \F_{\mu,L}$, denote by $x_\star$ one of its minimizers. Consider $N$ iterations of \eqref{eq:SGD} with constant stepsize $\alpha_k=\alpha$, $k=1,\ldots,N$, on $f$, with additive noise $(\ve_k)_{k\in \lb N-1\rb}$ satisfying $\E[\ve_k|\A_k]=0$, and $\E[\|\ve_k\|^2|\A_k]\leq A_1(f(x_k)-f(x_\star))+D_1$, where $(x_k)_{k\in \NN}$ denote the iterates of \eqref{eq:SGD}. Then, \begin{align}
      E[\|x_{N}-x_\star\|]^2\leq \phi^{N}\|x_0-x_\star\|^2+\frac{1-\phi^{N}}{1-\phi} \alpha^2D_1,
  \end{align}  
  where $\phi=(1-\mu\alpha)$.
\end{proposition}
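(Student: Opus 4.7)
The plan is to follow the two-step strategy of Theorem \ref{thm:absolute_noise_cst_stepsize} and \cite[Theorem 4.1]{gorbunov2020unified}, namely (i) derive a one-step contraction inequality in conditional expectation, and (ii) unroll it by induction, but retain the \emph{exact} finite geometric sum rather than relaxing it to its infinite counterpart.

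\textbf{Step 1 (one-step bound).} First I would establish that, for every $k \in [N-1]$,
\begin{equation*}
\E[\|x_{k+1}-x_\star\|^2 \mid \A_k] \leq \phi\,\|x_k-x_\star\|^2 + \alpha^2 D_1, \qquad \phi = 1-\mu\alpha.
\end{equation*}
The argument expands $\|x_{k+1}-x_\star\|^2 = \|x_k-x_\star-\alpha(g_k+\ve_k)\|^2$ and applies $\E[\ve_k\mid\A_k]=0$ together with the variance assumption $\E[\|\ve_k\|^2\mid \A_k] \leq A_1(f(x_k)-f(x_\star)) + D_1$. It then suffices to show
\begin{equation*}
-2\alpha\langle g_k, x_k-x_\star\rangle + \alpha^2\|g_k\|^2 + \alpha^2 A_1(f(x_k)-f(x_\star)) \leq -\mu\alpha\|x_k-x_\star\|^2,
\end{equation*}
which follows by combining the interpolation inequalities of Proposition \ref{prop:FmuL_CI} applied to the pair $(x_k,x_\star)$ (with $g_\star=0$) with the stepsize condition $\alpha \leq 2/A_1$; the latter is precisely what enables the term $\alpha^2 A_1(f(x_k)-f(x_\star))$ to be absorbed by $-2\alpha\langle g_k, x_k-x_\star\rangle$ through the convexity inequality $f(x_k)-f(x_\star)\leq \langle g_k, x_k-x_\star\rangle$. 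Alternatively, this one-step inequality is exactly \cite[Lemma C.1]{gorbunov2020unified}, which can simply be cited.

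\textbf{Step 2 (unrolling).} Taking total expectation via the tower property, the one-step bound becomes $\E[\|x_{k+1}-x_\star\|^2] \leq \phi\,\E[\|x_k-x_\star\|^2] + \alpha^2 D_1$. A straightforward induction on $k$ then yields
\begin{equation*}
\E[\|x_N-x_\star\|^2] \leq \phi^N \|x_0-x_\star\|^2 + \alpha^2 D_1 \sum_{i=0}^{N-1}\phi^i.
\end{equation*}

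\textbf{Step 3 (exact geometric sum).} The only departure from \cite[Theorem 4.1]{gorbunov2020unified} is that I would not relax $\sum_{i=0}^{N-1}\phi^i$ to its infinite limit $\tfrac{1}{1-\phi}$; instead, I would use the finite identity $\sum_{i=0}^{N-1}\phi^i = \tfrac{1-\phi^N}{1-\phi}$ (valid whenever $\phi\neq 1$, which holds here since $\mu>0$ and $\alpha>0$), yielding the stated bound.

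The only substantive step is Step 1, which either follows from a short interpolation argument in the spirit of the proof of Theorem \ref{thm:absolute_noise_cst_stepsize} or can be imported directly from \cite[Lemma C.1]{gorbunov2020unified}. The remaining steps are routine, and the (modest) quantitative improvement over \cite[Theorem 4.1]{gorbunov2020unified} comes entirely from retaining the exact finite geometric sum.
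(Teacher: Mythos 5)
Your proposal is correct and follows the same route as the paper: the paper's entire proof is exactly your Steps~2 and~3, namely, take the one-step bound \eqref{eq:gorbu} from \cite[Lemma~C1]{gorbunov2020unified} and unroll it using $\sum_{i=0}^{N-1}\phi^i = \frac{1-\phi^N}{1-\phi}$ instead of relaxing to $\frac{1}{1-\phi}$. One small caveat on your optional Step~1 sketch: under the noise model as stated (with no $-\|g_k\|^2$ term in the variance bound), the convexity inequality $f(x_k)-f(x_\star)\leq\langle g_k,x_k-x_\star\rangle$ together with $\alpha\leq 2/A_1$ does not by itself absorb the leftover positive term $\alpha^2\|g_k\|^2$; one also needs the smoothness lower bound $\|g_k\|^2\leq 2L(f(x_k)-f(x_\star))$, and the naive version of this combination would require $\alpha\leq 2/(A_1+2L)$ rather than $\alpha\leq 2/A_1$, so deriving the one-step bound from scratch is slightly more delicate than your sketch suggests — but since you also propose importing it directly from \cite[Lemma~C.1]{gorbunov2020unified}, which is precisely what the paper does, the argument as a whole stands.
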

\begin{proof}
    It suffices to perform a recursion on \eqref{eq:gorbu}, and rely on $\sum_{i=0}^{N-1} \phi^i=\frac{1-\phi^N}{1-\phi}$.
\end{proof}

The analytical guarantee in Proposition~\ref{prop:bound_A1_C1} coincides with the numerical bound from~\eqref{eq:relax}, for $\alpha=\frac{2}{A_1}$. Additionally, the bound from~\cite[Theorem 4.1]{gorbunov2020unified} matches exactly when \(D_1 = 0\). As expected, the exact bound under the finite-sum setting is tighter than those based solely on expectation, variance, and uncorrelation of the noise. In addition, multiple parameter choices for the noise variance are possible in the finite-sum setting, resulting in bounds whose quality depends on these parameters. However, the bounds from \cite[Theorem 4.1]{gorbunov2020unified}, \eqref{eq:relax}, and Proposition~\ref{prop:bound_A1_C1}, cover more stochastic settings than finite-sum problems. 
\paragraph{Strong growth condition and RCD}
We now consider noise satisfying the strong growth condition, i.e., for some $d\in\N$, $$\E[\|\ve_k\|^2|\A_k] \leq (d-1)\|g_k\|^2,$$ introduced in \cite{schmidt2013fast}. This condition is satisfied by several settings, including some finite-sum problems, where it implies $\E[\|\ve_k\|^2|\A_k] \leq 2Ld(f(x_k) - f(x_\star))$, see \cite[Proposition 1]{vaswani2019fast}, and randomized coordinate descent (RCD), where the gradient estimator is $d\e_i^\top \nabla f(x_k) \e_i$, with $d$ the dimension and $\e_i$ a randomly selected basis vector \cite[Section 1.4]{wright2015coordinate}. This estimator is chosen as to be zero-mean, and the classical estimator $\e_i^\top \nabla f(x_k) \e_i$ can be analyzed via a modified stepsize, $\tilde{\alpha}=d\alpha$. Throughout, we set $\alpha=\frac{1}{d}$.

Figure~\ref{fig:RCDa} compares, in the strongly convex regime, the bound resulting from the proposed framework, the bound from \cite[Theorem 4.1]{gorbunov2020unified} (based on the variance model $\E[\|\ve_k\|^2|x_k] \leq 2Ld(f(x_k) - f(x_\star))$), and the exact all-scenarios approach from \cite[Theorem 17]{taylor2019stochastic}, specifically adapted to RCD. This bound is given by 
\begin{align}
    \E[\|x_N-x_\star\|^2]\leq\bigg(\frac{(1-\frac{\mu}{L})^2+d-1}{d}\bigg)^N\|x_0-x_\star\|^2.
\end{align}
The bound from \cite{abbaszadehpeivasti2022convergence}, solving a slightly relaxed version of \eqref{eq:relax}, exactly matches \eqref{eq:relax}. 

Figure \ref{fig:RCDb} compares, in the convex regime, the bound resulting from the proposed framework, the bound resulting from the approach proposed in \cite{abbaszadehpeivasti2022convergence}, and the all-scenarios approach from \cite[Theorem 17]{taylor2019stochastic}, i.e., letting $\tilde{\alpha}=\frac{1}{L}$,
\begin{align}
    \E[f(x_N)-f_\star]\leq \frac{d\tilde{\alpha}}{2N}\|x_0-x_\star\|^2.
\end{align}
\begin{figure}[ht!]
  \centering
  \begin{subfigure}[t]{0.49\textwidth}
    \centering
    \includegraphics[width=\linewidth]{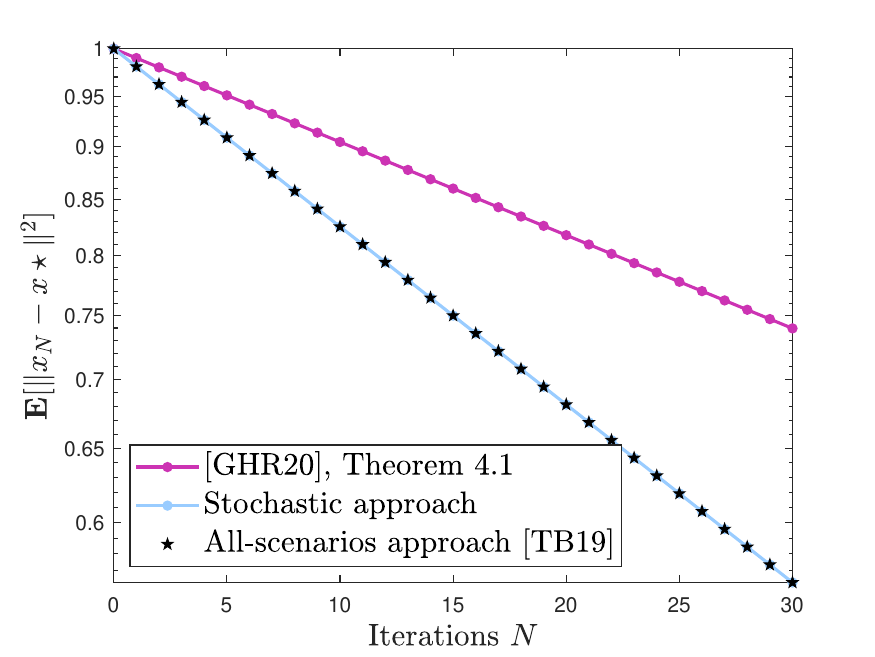}
    \caption{Strongly convex regime}
    \label{fig:RCDa}
  \end{subfigure}
  \hfill
  \begin{subfigure}[t]{0.49\textwidth}
    \centering
    \includegraphics[width=\linewidth]{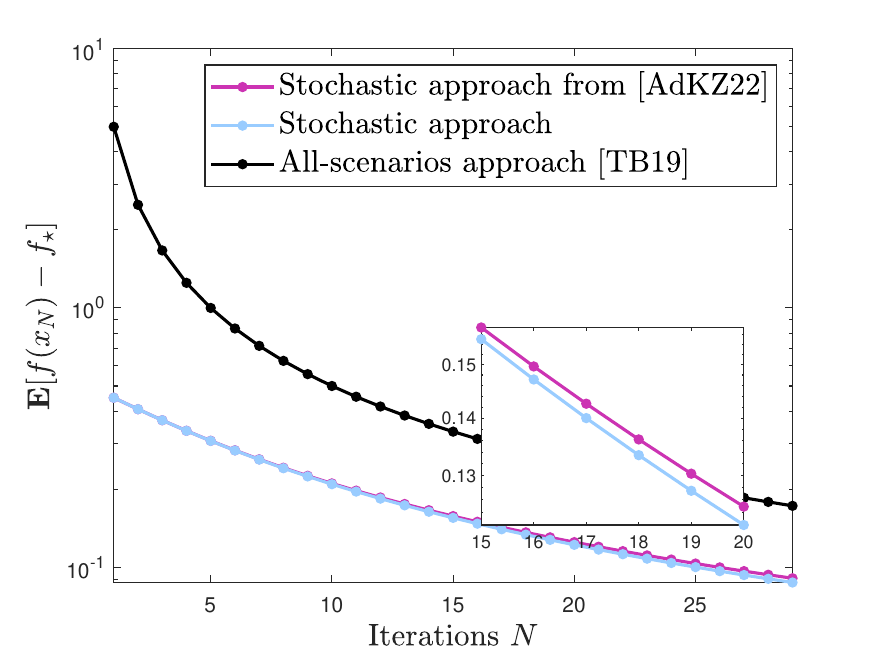}
    \caption{Convex regime}
    \label{fig:RCDb}
  \end{subfigure}
  \caption{Comparison of convergence analysis of \eqref{eq:SGD} with constant stepsize $\alpha_k = \frac{1}{dL}$, on $\F_{\mu,L}$ ($\mu>0$) and $\F_{0,L}$, with noise satisfying $\E[\ve_k|\A_k] = 0$, and $\E[\|\ve_k\|^2|\A_k] \leq (d - 1)\|g_k\|^2$. The performance measure is $\E[\|x_N - x_\star\|^2]$ with initial condition $\|x_0 - x_\star\| \leq 1$, for $\mu = 0.1$, $L = 1$, and $d = 10$. The left-hand side figure, considering the strongly convex regime, displays the bound from \cite[Theorem 4.1]{gorbunov2020unified}, using $\E[\|\ve_k\|^2|x_k] \leq 2Ld(f(x_k) - f(x_\star))$ (pink), the PEP-based bound from \eqref{eq:relax} (blue), and the exact all-scenarios bound designed for RCD from \cite[Theorem 17]{taylor2019stochastic} (black). The right-hand side figure, considering the convex regime, displays the PEP-based bound from \eqref{eq:relax} (blue), the PEP-based bound from \cite{abbaszadehpeivasti2022convergence} (pink), and the all-scenarios bound designed for RCD from \cite[Theorem 16]{taylor2019stochastic} (black).}
    \label{fig:RCD}
\end{figure}

Surprisingly, the bounds resulting from the all-scenario approach, i.e., \cite[Theorems 16, 17]{taylor2019stochastic}, and that rely on more information than the framework of Section \ref{sec:framework}, (i) exactly match the bounds resulting from \eqref{eq:relax}, in the strongly convex setting, and (ii) are outperformed by \eqref{eq:relax} in the convex setting. This lack of tightness is explained by the structure of the Lyapunov function, chosen by hand in the all-scenario approach. In addition, the bound from \eqref{eq:relax} improves on \cite[Theorem 4.1]{gorbunov2020unified}, and, in the convex setting (sublinear convergence rate), on the one of \cite{abbaszadehpeivasti2022convergence}.
\paragraph{Variance-reduction method: SAGA}

Finally, we consider the variance-reduction method SAGA \cite{defazio2014saga}, designed for finite-sum optimization problems. SAGA update at iteration \(k\) is
\[
x_{k+1} = x_k - \alpha \left( \nabla f_{i_k}(x_k) - y_{i_k}^k + \frac{1}{n} \sum_{j=1}^n y_j^k \right),
\]
where \(i_k\) is sampled uniformly from \(\{1, \ldots, n\}\), and \(y_j^k\) stores the most recent gradient \(\nabla f_j\) evaluated at some previous iterate. SAGA can be embedded as \eqref{eq:SGD}, with constant stepsizes $\alpha$, and $\nabla f_{i_k}(x_k) - y_{i_k}^k + \frac{1}{n} \sum_{j=1}^n y_j^k $ an unbiaised estimator of $\nabla f(x_k)$, satisfying \(\mathbb{E}[\ve_k|\A_k] = 0\), and the variance condition \eqref{eq:assum_noise} with parameters
\[
A_1 = 4L,\quad  B_1=-1, \quad E_1 = 2, \quad \rho = \frac{1}{n}, \quad A_2 = \frac{2L}{n},
\]
where \(L = \max_i L_i\), \(n\) is the number of component functions, and $\sigma_k^2=\frac{1}{n}\sum_{i=1}^n\|y_i^k-\nabla f_i(x_\star)\|^2$ \cite[Lemma A.6]{gorbunov2020unified}.

Figure~\ref{fig:SAGA} compares the convergence bounds from \cite[Theorem 4.1]{gorbunov2020unified} (based only on noise expectation and variance), the PEP-based bound from \eqref{eq:relax}, and the exact all-scenarios bound after one iteration, from \cite{pepit2022}. The exact all-scenarios bound outperforms \eqref{eq:relax} but suffers from scalability issues. In addition, it depends on \(n\), making analytical characterization difficult. The bound from \cite[Theorem 4.1]{gorbunov2020unified} coincides with \eqref{eq:relax} after one iteration, but loses its tightness property from $2$ iterations.
\begin{figure}[ht!]
    \centering
    \includegraphics[width=0.55\linewidth]{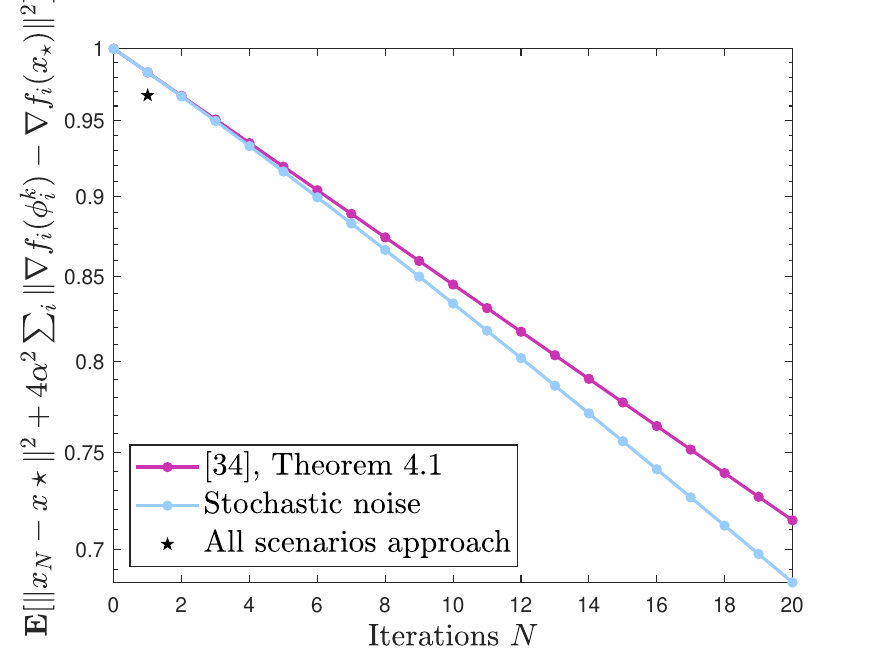}
\caption{Comparison of convergence analyses of \eqref{eq:SGD} with constant stepsizes $\alpha=\frac{1}{6L}$, on $\F_{\mu,L}$, with noise satisfying $\E[\ve_k|\A_k] = 0$, and \eqref{eq:assum_noise} with parameters $A_1 = 4L$, $E_1 = 2$, $\rho = \frac{1}{n}$, and $A_2 = \frac{2L}{n}$. These assumptions hold for SAGA \cite{defazio2014saga}. We consider the performance measure $\E[\|x_N - x_\star\|^2 + 4n \alpha^2 \sigma_N^2]$ with initial condition $\|x_0 - x_\star\|^2 + 4n \alpha^2 \sigma_0^2 \leq 1$, for $\mu = 0.1$, $L = 1$, and $n = 10$. Displayed are the bound from \cite[Theorem 4.1]{gorbunov2020unified} (pink), the PEP-based bound from \eqref{eq:relax} (blue), and the exact all-scenarios PEP-based bound ($\star$).}
    \label{fig:SAGA}
\end{figure}
\section{Conclusion}
We have proposed a framework for automated analysis of stochastic first-order methods, under all noise models satisfying Assumption~\ref{assum:prob_def}, and independently of the noise's distribution. The framework positions itself between PEP analyses tailored for deterministic perturbation \cite{taylor2017exact,de2020worst,vernimmen2025empirical,vernimmen2025worst} (or equivalently, analyzing the worst realization of a stochastic perturbation) and those, regarding finite support settings, explicitly enumerating all scenarios \cite{cortild2025new,taylor2019stochastic,kamri2023worst}. Indeed, by embedding expectations as variables within the PEP framework, as in \cite{abbaszadehpeivasti2022convergence}, our method improves upon deterministic analyses while maintaining similar complexity. The framework also compares to the all-scenarios approach. On structural noise models, e.g., finite-sum setting or block-coordinate settings, it may yield looser rates than those obtained via the all-scenario approach, relying on this structural information. This is especially the case for problems with linear convergence rates. However, on problems with sublinear rates, it occasionally improves on the all-scenarios approach, since the latter is (mainly) restricted to Lyapunov-based analyses, while our framework remains tractable on a large number of iterations, and yields stronger convergence rates than those obtained via a Lyapunov approach. In addition, by contrast to the all-scenarios approach, the proposed framework enables a
unified framework applicable across diverse stochastic settings, and is not restricted to finite supports.

\paragraph{Extensions}
While this work focused on \eqref{eq:SGD} over the class $\F_{\mu,L}$, the proposed framework readily extends to a broader range of methods and function classes. As detailed in Section~\ref{sec:framework}, any problem satisfying Assumption~\ref{assum:prob_def}, including accelerated methods or those applied to non-convex or non-smooth problems, can be analyzed within this framework. Aside from analyzing existing methods, the framework can also be relied upon to design new efficient methods, as illustrated, e.g., in \cite{drori2020efficient}.

Future work also includes obtaining tightness guarantees for the proposed framework.

Finally, our framework supports a reverse perspective: identifying the noise assumptions under which desirable convergence rates are achievable, and designing algorithms that satisfy them. 
\section*{Acknowledgments}
The authors would like to thank Balasz Gerencsér for fruitful discussions. 
\section*{Conflict of interest}

The authors declare that they have no conflict of interest. Views and opinions expressed are those of the authors only.
\bibliographystyle{spmpsci}
\bibliography{bibli}
\appendix
\section{Full SDP formulation of an instance of \eqref{eq:relax}}\label{app:fullLMI}
This section provides the full SDP formulation for solving a slightly generalized version of \eqref{eq:PEP_SDP_ex}. That is, we provide an example of SDP formulation for solving \eqref{eq:relax}, on the analysis of $N$ iterations of \eqref{eq:SGD}, on $\F_{0,L}$, under the additive bounded noise model. 

Consider first the general case of analyzing $N$ iterations of a method $\M$ on a class $\F$, under noise $\Omega_N$, and given an initial condition $\Ci$ satisfying Assumption \ref{assum:prob_def}. In Section \ref{sec:framework}, we defined the Gram representation of sets $S = \{(x_i, f_i, g_i)\}_{i \in \NN\cup\star}\in(\R^d\times\R\times\R^d)^{N+2}$, and $V=\{(\ve_i,\sigma_i^2)\}_{i \in[N-1]}\in(\R^d\times\R^d)^N$ as
\begin{align}        
    F &:=[f_\star, f_0, \ldots,\ f_N,\sigma_0^2,\ldots,\sigma_{N-1}^2]^\top,\\
    G&:=PP^\top,\text{ where } P:=[x_\star,x_0,\ldots,x_N,g_\star,g_0,\ldots,g_N,\ve_0,\ldots,\ve_{N-1}]^\top,\nonumber
\end{align}
see \eqref{eq:equivFGSV}. For computational purposes, we introduce a slightly different Gram representation of $S\cup V$, that is,
\begin{align}        
    F &:=[f_0, \ldots,\ f_N,\sigma_0^2,\ldots,\sigma_{N-1}^2]^\top,\\
    G&:=PP^\top,\text{ where } P:=[x_0,g_0,\ldots,g_N,\ve_0,\ldots,\ve_{N-1}]^\top,\nonumber
\end{align}
This representation allows (i) reducing the size of the variables in all SDP formulations of \eqref{eq:PEP_untractable_gen}, and (ii) encoding the optimal condition $\|g_\star\|^2=0$, and the method's dynamics directly via \emph{selection vectors}, whose purpose are to access all scalar products in $G$, including those involving $x_k$, $k\leq 1$. Specifically, we consider w.l.o.g. $x_\star$ to be the origin, and $f_\star=0$. To access elements of $G$ and $F$, given $\e_i$ the $i^{\th}$-unit vector, we then define selection vectors
\begin{itemize}
    \item $\g_\star = \x_\star = \mathbf{0} \in \mathbb{R}^{2N+2}$, $\f_\star=\mathbf{0} \in \mathbb{R}^{2N+1}$,
    \item For $k \in \NN$, $\g_k = \e_{k+2} \in \mathbb{R}^{2N+2}$ and $\f_k = \e_{k+1} \in \mathbb{R}^{N+1}$,
    \item For $k \in \lb N-1\rb$, $\vve_k = \e_{N+3+k} \in \mathbb{R}^{2N+2}$ and $\s_k = \mathbf{e}_{N+2+k} \in \mathbb{R}^{2N+1}$,
    \item $\x_0 = \e_1 \in \mathbb{R}^{2N+2}$, and $\x_{k+1} = \x_0 + \sum_{i=0}^{k+1} \alpha_{k,i} (\g_i + \vve_i), \quad k \in \lb N-1\rb.$
\end{itemize}
These vectors allow accessing all $f_i$'s, $\sigma_j$  ($i\in\NN\cup\star$, $j\in[N-1]$) in $F$, and scalar products of $x_i,g_i,\ve_j$'s ($i\in\NN\cup\star$, $j\in[N-1]$) in $G$. For instance, 
\begin{align*}
    &\x_i^\top G\x_j=\langle x_i,x_j \rangle,&& \g_i^\top G\g_j=\langle g_i,g_j \rangle,     & \vve_i^\top G\vve_j =\langle \ve_i,\ve_j \rangle ,\\
    &\x_i^\top G\g_j=\langle x_i,g_j \rangle,
     && \g_i^\top G\vve_j=\langle g_i,\ve_j \rangle,\\
    &\x_i^\top G\vve_j=\langle x_i,\ve_j \rangle,&&
\end{align*}
and 
\[
    F^\top (\f_i - \f_\star) = f_i - f_\star.
\]
Using these selection vectors allows formulating \eqref{eq:PEP_SDP_ex} (generalized to $N$ iterations) as follows.
    \begin{align*}
    & &&\max_{\substack{(F,G) \in \R^{N+1}\times S_+^{2N+2}}}
    F^\top(\f_N-\f_\star) \text{ s.t. }\\&\text{$\F$-interpolability:}&&
  F^\top (\f_i-\f_j)\geq \g_j^\top G(\x_i-\x_j)+\frac{1}{2L}(\g_j-\g_i)^\top G(\g_j-\g_i), \ i,j \in \NN\cup\star,\\
  &\text{$\Ci$:}&& (\x_0-\x_\star)^\top G(\x_0-\x_\star)\leq 1,\\
  &\Cind:&&\vve_k^\top G\x_j=\vve_k^\top G\g_j=\ve_k^\top G\vve_l=0, \ k,l\in[N-1],\ l\neq k,\ j\leq k, \\&\Cn:&&\vve_k^\top G \vve_k = \sigma^2, \ k\in[N-1],\end{align*}
  where satisfaction of $\M$ arises from the definition of the selection vectors, and the interpolation conditions follow from \eqref{eq:FmuL_CI}. 
\end{document}